\documentclass{article}
\setlength{\textwidth}{6.5in} \hoffset -2truecm
\setlength{\textheight}{9.5in} \voffset -2.75truecm%
\usepackage{amsmath,amssymb,amsthm,times,url}
\newtheorem{theorem}{Theorem}[section]
\newtheorem{lemma}[theorem]{Lemma}
\newtheorem{conjecture}[theorem]{Conjecture}
\newtheorem{definition}[theorem]{Definition}

\newtheorem{remark}[theorem]{Remark}
\newtheorem{corollary}[theorem]{Corollary}
\newtheorem{proposition}[theorem]{Proposition}

\def \be {\begin{equation}}
\def \ee {\end{equation}}
\def \btb {\begin{tablen}}
	\def \etb {\end{tablen}}
\def \bea {\begin{eqnarray}}
\def \eea {\end{eqnarray}}
\def \bean {\begin{eqnarray*}}
	\def \eean {\end{eqnarray*}}
\def \bd {\begin{definition}}
	\def \ed {\end{definition}}
\def \bl {\begin{lemma}}
	\def \el {\end{lemma}}
\def \bcon {\begin{conjecture}}
	\def \econ {\end{conjecture}}
\def \bp {\begin{proposition}}
	\def \ep {\end{proposition}}
\def  \br {\begin{remark}}
	\def \er {\end{remark}}
\def \bt {\begin{theorem}}
	\def \et {\end{theorem}}
\def \bc {\begin{corollary}}
	\def \ec {\end{corollary}}
\def \been {\begin{enumerate}}
	\def \een {\end{enumerate}}
\def \bit {\begin{itemize}}
	\def \eit {\end{itemize}}

\def \nb {\mathbb{N}}

\def \ib {\mathbb{I}}

\def \dq {\mathcal{D}_q}
\def \sq {\mathcal{S}_q}

%

\newcommand{\qhypergeomphi}[5]{\mbox{$
		_#1 \phi_#2 \left(
		\begin{array}{c}
		\multicolumn{1}{c}{\begin{array}{c} #3
			\end{array}}\\[1mm]
		\multicolumn{1}{c}{\begin{array}{c} #4
			\end{array}}\end{array}
		; \displaystyle{#5}\right) $} }

\usepackage{color}

\begin{document}
	
	\title{A Characterization of Askey-Wilson polynomials}
	\author{Maurice Kenfack Nangho$^{1,2}$ and Kerstin Jordaan$^{3}$\\[2.5pt]
		$^{1}$ Department of Mathematics and Applied Mathematics, University of Pretoria\\
	 $^{2}$ Department of Mathematics and Computer Science, \\University of Dschang, Cameroon\\ {\tt maurice.kenfack@univ-dschang.org}\\[2.5pt]
		$^{3}$ Department of Decision Sciences,\\
		University of South Africa, PO Box 392, Pretoria, 0003, South Africa\\ 
		{\tt jordakh@unisa.ac.za}}
	\maketitle
	
	\begin{abstract}
		\noindent  We show that the only monic orthogonal polynomials $\{P_n\}_{n=0}^{\infty}$ that satisfy
		$$\pi(x)\mathcal{D}_{q}^2P_{n}(x)=\sum_{j=-2}^{2}a_{n,n+j}P_{n+j}(x),\; x=\cos\theta,\;~ a_{n,n-2}\neq 0,~ n=2,3,\dots,$$
		where $\pi(x)$ is a polynomial of degree at most $4$ and $\mathcal{D}_{q}$ is the Askey-Wilson operator, are Askey-Wilson polynomials and their special or limiting cases. This  completes and proves a conjecture by Ismail concerning a structure relation satisfied by Askey-Wilson polynomials. We use the structure relation to derive upper bounds for the smallest zero and lower bounds for the largest zero of Askey-Wilson polynomials and their special cases.
	\end{abstract}

\maketitle
\section{Introduction}
A sequence of polynomials $\{p_n\}_{n=0}^{\infty}$, deg$(p_n)=n$,  is orthogonal with respect to a positive measure $\mu$ on the real numbers $\mathbb{R}$, if
\[\int_{S}p_m(x)p_n(x)d\mu(x)=d_n\delta_{m,n},\;m,n\in \mathbb{N},\]
where $S$ is the support of $\mu$, $d_n>0$ and $\delta_{m,n}$ the Kronecker delta.
A sequence $\{P_n\}_{n=0}^{\infty}$ of monic polynomials orthogonal with respect to a positive measure satisfies a three-term recurrence relation
\begin{equation}\label{e63}
P_{n+1}=(x-a_n)P_{n}-b_nP_{n-1}, \quad n=0,1,2,\ldots\end{equation}
with initial conditions $P_{-1}\equiv  0$, $P_{0}\equiv1$
(note that with this choice of $P_{-1}$, the initial value of $b_0$ is irrelevant)
and recurrence coefficients $a_n\in\mathbb{R}$, $n=0,1,2\dots$, $b_{n}>0$, $n=1,2,\dots.$

 A  sequence of monic orthogonal polynomials is classical if the sequence $\{P_n\}_{n=0}^{\infty}$ as well as $D^mP_{n+m}$, $m\in \mathbb{N}$, where $D$ is the usual derivative $\tfrac{d}{dx}$ or one of its extensions (difference, $q$-difference or divided-difference operator) satisfies a three-term recurrence of the form (\ref{e63}).
 When $D=\frac{d}{dx}$, Hahn \cite{Hahn1936} showed that a sequence of monic orthogonal polynomials $\{P_n(x)\}_{n=0}^{\infty}$
 satisfying
 \begin{equation*}
 \frac{1}{n+1}\frac{dP_{n+1}}{dx}(x)=(x-a'_n)\frac{1}{n}\frac{dP_n}{dx}(x)-\frac{b'_n}{n-1}\frac{dP_{n-1}}{dx}(x),\;\;a'_n,\,b'_n\in \mathbb{R},b'_n\neq 0,
 \end{equation*} satisfies a second order Sturm-Liouville differential equation of the form
 \begin{equation} \label{Sturmleq} \phi(x)\frac{d^2}{dx^2}P_n(x)+\psi(x)\frac{d}{dx}P_n(x)+\lambda_n\,P_n=0.\end{equation}
 where, $\phi$ and $\psi$ are polynomials independant of $n$ with deg($\phi)\leq 2$ and deg($\psi)=1$ while $\lambda_n$ is a constant dependant on $n$. Bochner \cite{bochner1929} first considered sequences of polynomials satisfying (\ref{Sturmleq}) and showed that the orthogonal polynomial solutions of (\ref{Sturmleq}) are Jacobi, Laguerre and Hermite  polynomials, a result known as Bochner's theorem. Bochner's theorem has been generalized and used to characterize Askey-Wilson polynomials (cf. \cite{Mourad2003}). See also \cite{Grunbaum,VinetZhedanov}.

A related problem, due to Askey (cf. \cite{Al-Chihara1972}), is to characterize the orthogonal polynomials whose derivatives satisfy a structural relation of the form
\begin{equation*}
\pi(x)\frac{d}{dx}P_n(x)=\sum_{j=-r}^{s}a_{n,n+j}P_{n+j}(x),\quad n=1,2,\dots
\end{equation*}
and this problem was considered by Maroni (cf. \cite{maroni1985}, \cite{maroni1987}) who called
such orthogonal polynomial sequences semi-classical.

Al-Salam and Chihara \cite{Al-Chihara1972} characterized  {Jacobi}, {Laguerre} and {Hermite} as the only orthogonal polynomials with a structure relation of form
\begin{equation}\label{e57}
\pi(x)\frac{d}{dx}P_n(x)=\sum_{j=-1}^{1}a_{n,n+j}P_{n+j}(x),\quad n=1,2,\dots
\end{equation}
where $\pi(x)$ is a polynomial of degree at most two.
Replacing the usual derivative in (\ref{e57}) by the forward difference operator $$\Delta f(s)=f(s+1)-f(s),$$ Garc\'ia, Marcell\'an and Salto \cite{GaMar1995} proved that {Hahn}, Krawtchouk, {Meixner} and {Charlier} polynomials are the only orthogonal polynomial sequences satisfying

\[
\pi(x)\Delta P_n(x)=\sum_{j=-1}^{1}a_{n,n+j}P_{n+j}(x),\quad n=1,2,\dots
\] with $\pi(x)$ a polynomial of degree two or less.
More recently, replacing the derivative in (\ref{e57}) by the Hahn operator (cf. \cite[(11.4.1)]{Mourad-2005}, \cite{Hahn1949}), also known as the $q$-difference operator or Jackson derivative \cite{Jackson},
\[(D_qf)(x)=\frac{f(x)-f(qx)}{(1-q)x},\]  Datta and Griffin \cite{datta} characterized the big $q$-Jacobi polynomial or one of its special or limiting cases ({Al-Salam-Carlitz 1}, {little and big $q$-Laguerre}, {little $q$-Jacobi}, and {$q$-Bessel} polynomials) as the only orthogonal polynomials that satisfy
\begin{equation}\label{e57a}
\pi(x) D_qP_n(x)=\sum_{j=-1}^{1}a_{n,n+j}P_{n+j}, \quad n=1,2,\dots
\end{equation}
where $\pi(x)$ is a polynomial of degree at most two.

The polynomials mentioned above are all special or limiting cases  of the {Askey-Wilson polynomials} \cite[(1.15)]{Askey-1985}, \cite[(14.1.1)]{KSL}

\begin{equation}\label{e58}
\frac{a^np_n(x;a,b,c,d|q)}{(ab,ac,ad;q)_n}=  \qhypergeomphi{4}{3}{q^{-n},\,abcdq^{n-1},\,ae^{-i\theta},ae^{i\theta}}{ab,\,ac,\,ad}{q,q},\;x=\cos\theta,
\end{equation}
with the multiple $q$-shifted factorials defined by
$(a_1,\dots,a_i;q)_k=\displaystyle{\prod_{j=1}^{i}(a_j;q)_k}$ where the $q$-shifted factorials are given by $(a;q)_0=1, \quad (a;q)_k=\displaystyle{\prod_{j=0}^{k-1}\left(1-aq^j\right)},\\~k=1,2,\dots ~\text{or}~\infty $ and $$\qhypergeomphi{{s+1}}{s}{a_1,\dots ,a_{s+1}}{b_1,\dots,b_s}{q,z}=\sum_{k=0}^{\infty}\frac{(a_1,\dots,a_{s+1};q)_k}{(b_1,\dots,b_s;q)_k}\frac{z^k}{(q;q)_k}.$$
Askey-Wilson polynomials do not satisfy either (\ref{e57}) or (\ref{e57a}) but they do satisfy the shift relation (cf. \cite[(14.1.9)]{KSL})
\begin{eqnarray*}\mathcal{D}_qp_n(x,a,b,c,d|q)=\frac{2q^{\frac{1-n}{2}}(1-q^n)(1-abcdq^{n-1})}{1-q}p_{n-1}(x;aq^{\frac{1}{2}},bq^{\frac{1}{2}} cq^{\frac{1}{2}}, dq^{\frac{1}{2}}|q)\end{eqnarray*}
where {$\mathcal{D}_q$} is the {Askey-Wilson divided difference operator} (cf. \cite[p.35]{Askey-1985}, \cite[(1.16.4)]{KSL}, \cite[(12.1.12)]{Mourad-2005})
\begin{equation}\label{e0.1}
\mathcal{D}_qf(x)=\frac{\breve{f}(q^{\frac{1}{2}}e^{i\theta})-\breve{f}(q^{-\frac{1}{2}}e^{{i\theta}})}{(e^{i\theta}-e^{-i\theta})(q^{\frac{1}{2}}-q^{-\frac{1}{2}})/2},~~\breve f(z)=f\left(\frac{z+z^{-1}}{2}\right), \quad z=e^{\pm i\theta}.\end{equation}
The Askey problem involving the Askey-Wilson operator $\mathcal{D}_q$ is still open but in 2005, Ismail \cite{Mourad-2005}  gave an important hint to the solution of this problem with the following conjecture.
\begin{conjecture}\cite[Conjecture 24.7.9]{Mourad-2005}\label{Mconjecture}
	Let $\{P_n\}$ be orthogonal polynomials and $\pi$ be a polynomial of degree at most 4. Then $\{P_n(x)\}$ satisfies
	\[\pi(x)\mathcal{D}_q^2P_n(x)=\sum_{j=-r}^{s}a_{n,n+j}P_{n+j}(x)\]
	if and only if $\{P_n(x)\}$ are Askey-Wilson polynomials or special cases of them.
\end{conjecture}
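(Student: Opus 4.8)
The plan is to reduce the structure relation to a $q$-difference/functional equation for the orthogonality measure (or equivalently for the recurrence coefficients) and then identify the solution family. The guiding principle, following the classical analogues of Al-Salam--Chihara, Garc\'ia--Marcell\'an--Salto and Datta--Griffin, is that the operator $\mathcal D_q$ lowers degree by one, so $\pi(x)\mathcal D_q^2 P_n$ has degree $\le n+2$ when $\deg\pi\le 4$; expanding it in the basis $\{P_k\}$ gives at most the five terms $P_{n-2},\dots,P_{n+2}$ automatically, and the genuine hypothesis is that the \emph{lowest} coefficient $a_{n,n-2}$ is nonzero for $n\ge 2$. So the first step is to make the band structure precise: compute, using the action of $\mathcal D_q$ on the basis $\phi_n(x)=\prod(x-\text{something})$ adapted to the Askey--Wilson lattice $x=\cos\theta$, the leading coefficients on both sides, and extract from the top three coefficients ($j=2,1,0$) the explicit form that $a_{n,n+j}$ must take as functions of $n$ and of the recurrence coefficients $a_n,b_n$. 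This already forces $\pi$ to have degree exactly $4$ generically (degree drop gives limiting cases) and pins down its leading behaviour.

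The second, and main, step is to turn the relation into a constraint purely on $\{a_n,b_n\}$. The standard device is to apply a suitable functional $\mathcal L$ (the orthogonality functional, or its companions), together with the known commutation relations between $\mathcal D_q$ and multiplication by $x$ on the Askey--Wilson lattice --- in particular the product rule $\mathcal D_q(fg)=(\mathcal D_q f)(\mathcal A_q g)+(\mathcal A_q f)(\mathcal D_q g)$ where $\mathcal A_q$ is the averaging operator --- to pair $\pi\mathcal D_q^2P_n$ against $P_{n+j}$ and against $x P_{n+j}$. Using orthogonality and the three-term recurrence \eqref{e63} repeatedly, one converts the five-term relation into a finite system of nonlinear difference equations for $a_n$ and $b_n$. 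One then seeks the general solution of this system; the expected outcome is that $a_n$ and $b_n$ are rational functions of $q^n$ of exactly the shape
\[
b_n=\frac{(1-q^n)(1-abcdq^{n-2})\,(\text{products of }(1-xyq^{n-1}))}{(1-abcdq^{2n-1})(1-abcdq^{2n-2})^2(1-abcdq^{2n-3})},
\]
i.e. precisely the Askey--Wilson recurrence coefficients (cf.\ \cite[(14.1.5)]{KSL}), with the various degenerations (setting parameters to $0$, to powers of $q$, or sending them to limits) giving the listed special cases. Matching constants then identifies $\pi$ as the explicit quartic built from $a,b,c,d$ that appears in the second-order $q$-difference equation for Askey--Wilson polynomials.

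To make the "general solution" step rigorous rather than a guess, I would argue by induction on $n$ exploiting the $j=-2$ coefficient: since $a_{n,n-2}\neq 0$, the relation determines $P_{n+2}$ (equivalently $a_{n+1},b_{n+1}$) from lower-order data together with $\pi$, so once the first few coefficients $a_0,a_1,b_1,b_2$ and the five coefficients of $\pi$ are fixed, the whole sequence is determined; it therefore suffices to show (i) the Askey--Wilson family provides a solution for every admissible choice of those finitely many parameters, and (ii) the map from parameters to $(a_0,a_1,b_1,b_2,\pi)$ is onto the admissible set, so no other solutions can arise. Part (i) is a (lengthy but routine) verification from the known $q$-difference equation; part (ii) is a dimension/parameter-count check, handling the boundary strata where the Jacobian degenerates as the limiting cases.

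\medskip

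The hardest part, I expect, is not any single computation but the bookkeeping in the second step: carrying the quartic $\pi$, the operator identities for $\mathcal D_q$ and $\mathcal A_q$, and two layers of the three-term recurrence through the pairing against both $P_{n+j}$ and $xP_{n+j}$ produces a large nonlinear system, and isolating from it the clean statement "$b_n$ is this specific rational function of $q^n$" --- while correctly tracking every degenerate branch so that the special and limiting cases are genuinely covered and nothing is lost --- is where the real work lies. A secondary subtlety is ensuring positivity ($b_n>0$) and the genuine orthogonality of the resulting sequence, i.e.\ that the formal solution of the difference system actually corresponds to a positive measure, which restricts the parameter ranges to exactly those for which Askey--Wilson polynomials (and their special cases) are defined.
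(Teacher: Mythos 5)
Your overall strategy --- extract a nonlinear difference system for the recurrence coefficients $(a_n,b_n)$ by pairing $\pi(x)\mathcal D_q^2P_n$ against $P_{n+j}$ and $xP_{n+j}$, and then show that the only solutions are the Askey--Wilson coefficients --- is in the spirit of the Al-Salam--Chihara and Datta--Griffin precedents, but the decisive step is left as an ``expected outcome'' rather than proved, and the device you propose to make it rigorous does not close the gap. The induction on $n$ is circular as stated: to ``determine $P_{n+2}$ from lower-order data'' via the $j=2$ term of the relation you need the coefficients $a_{n,n+j}$ for $j\le 1$, and these are Fourier coefficients with respect to the orthogonality measure, i.e.\ they depend on the entire sequence $(a_n,b_n)$ you are in the process of determining. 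Likewise the surjectivity claim in your part (ii) --- that every admissible tuple $(a_0,a_1,b_1,b_2,\pi)$ arises from Askey--Wilson parameters, including all degenerate strata --- is exactly the hard content of the theorem and cannot be settled by a parameter count alone. For a four-parameter family with a quartic $\pi$, solving the nonlinear system head-on is not realistic without a further structural idea, and none is supplied.

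The paper supplies precisely such an idea and thereby avoids the recurrence-coefficient computation entirely. It first shows (Theorem \ref{th1}) that the structure relation, with the added hypothesis $a_{n,n-2}\neq 0$ that completes Ismail's original statement, is equivalent to orthogonality of $\{\mathcal D_q^2P_j\}$ with respect to $\pi(x)w(x)$; it then proves a divided-difference analogue of Hahn's theorem (Lemma \ref{Prop1}), showing that a three-term-recurrence-type relation for $\{\mathcal D_qP_n/\gamma_n\}$ forces the second-order equation $\phi\,\mathcal D_q^2P_n+\psi\,\mathcal S_q\mathcal D_qP_n+\lambda_nP_n=0$. The identification of the polynomial solutions of that equation is then not re-derived but quoted from Ismail's generalization of Bochner's theorem \cite{Mourad2003}, which already classifies them as Askey--Wilson polynomials and their special and limiting cases. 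If you want to salvage your route you must either carry out the solution of the difference system in full, tracking every degeneration and verifying positivity, or interpose an analogue of this Hahn/Bochner reduction; as written, the proposal identifies the right target but does not reach it.
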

The aim of this paper is to complete and prove this conjecture in \S \ref{2} and to apply the explicit structure relation that characterizes Askey-Wilson polynomials to obtain inequalities satisfied by the extreme zeros of these polynomials in \S \ref{3}.
\section{Preliminaries} 
Before moving to our main result let us recall some basic results.
Taking $e^{i\theta}=q^s$, the operator (\ref{e0.1}) reads
\begin{equation*}
\mathcal{D}_qf(x(s))=\frac{f(x(s+\tfrac 12))-f(x(s-\tfrac{1}{2}))}{x(s+\tfrac{1}{2})-x(s-\tfrac{1}{2})},\;\;\;x(s)=\frac{q^{-s}+q^s}{2}.
\end{equation*}
Moreover, $x(s)$ satisfies (cf. \cite{ARS1995})
\bea
\nonumber x(s+n)-x(s)&=&\gamma_n\,\left(x\left(s+\tfrac12n+\tfrac 12\right)-x\left(s+\tfrac12n-\tfrac 12\right)\right),\\
\label{ee2}x(s+n)+x(s)&=&2\alpha_n\,x\left(s+\tfrac 12 n\right),\eea for
$n=0,\,1,\,\dots,$ with the sequences $(\alpha_n),\;(\gamma_n)$ given explicitly by
\be\label{e100}
2\alpha_n=q^{\tfrac n2}+q^{-\tfrac n2},~~ (q^{\tfrac 12}-q^{-\tfrac 12})\gamma_n=q^{\tfrac n2}-q^{-\tfrac n2},~~{\alpha_1}=
\alpha 
\ee
The following hold (cf. \cite[p.302]{Mourad-2005}, \cite[p.169]{foupouagnigni2008})
\bea
\dq(fg)&=&\mathcal{S}_q(f)\mathcal{D}_q(g)+\mathcal{D}_q(f)\mathcal{S}_q(g)\label{e16}
\\
\sq(fg)&=&\sq(f)\sq(g)+U_2\dq(f)\dq(g)\label{e16a}\\
\dq\,\sq &=&\alpha\,\sq\,\dq+U_1\,\dq^2\label{e21a}\\
\sq^2&=& U_1\,\sq\,\dq+\alpha\,U_2\,\dq^2+\ib,\label{e16b}
\eea
where $U_1(x)=(\alpha^2-1)x$, $U_2(x)=(\alpha^2-1)(x^2-1)$, $\ib(f)=f$ and $\sq$ is the averaging operator \cite[(12.1.21)]{Mourad-2005}
$$
\sq\,f(x(s))=\tfrac 12\left(f(x(s+\tfrac{1}{2}))+f(x(s-\tfrac{1}{2}))\right).
$$ 
Unless otherwise indicated, $0<q<1$.  
\section{Proving the conjecture due to Ismail}\label{2}
\medskip We begin by proving a lemma that generalizes a result proved by Hahn in \cite{Hahn1936}. We will denote a monic orthogonal polynomial of precise degree $n$, $n=1,2,\dots$ by $P_n(x)$ which implies that $\frac{1}{\gamma_n}\dq P_n(x)$ will be monic. To see this, normalise the basis in \cite[(20.3.9)]{Mourad-2005}, to obtain the monic polynomial base $\{F_k(x)\}$ where \newline $\displaystyle{F_k(x)=\tfrac{q^{-\tfrac{k^2}{4}}}{(-2)^k}(q^{\tfrac{1}{4}}q^s,q^{\tfrac{1}{4}}q^{-s};q^{\tfrac{1}{2}})_k={ \prod_{j=0}^{k-1}}[x-\zeta_j]}$, for $k=0,1,...$, $x=\cos\theta$ with  $\zeta_j=\tfrac{1}{2}(q^{-\tfrac{1}{4}-\tfrac{j}{2}}+q^{\tfrac{1}{4}+\tfrac{j}{2}})$. It follows from \cite[Thm 2.1]{Mourad2003b} that $P_n(x)=F_n(x)+...$ and, since $\dq F_k(x)=\gamma_kF_{k-1}(x)$ (cf. \cite[20.3.11]{Mourad-2005}), $\dq P_n(x)=\gamma_nF_{n-1}(x)+...$. 
\begin{lemma}\label{Prop1} Let $\{P_n\}_{n=0}^{\infty}$ a sequence of monic orthogonal polynomials. If there are two sequences $(a'_n)$ and $(b'_n)$ such that
	
	\begin{equation}\label{e18}
	\frac{1}{\gamma_{n+1}}\mathcal{D}_q P_{n+1}(x)=(x-a'_n)\frac{1}{\gamma_{n}}\mathcal{D}_q P_{n}(x)-\frac{b'_n}{\gamma_{n-1}}\mathcal{D}_q P_{n-1}(x)+c_n,\; c_n \in \mathbb{R},
	\end{equation}
	then there are two polynomials $\phi(x)$ and $\psi(x)$ of degree at most two and of degree one respectively and a sequence $\{\lambda_n\}_{n=0}^{\infty}$ depending on $n$ such that $P_n(x)$ satisfies the divided difference equation
	{\begin{equation}\label{e18a}
	\phi(x) \mathcal{D}_q^2P_n(x)+\psi(x)\mathcal{S}_q\mathcal{D}_q P_n(x)+\lambda_n P_n(x)=0,\;n\geq 5.
	\end{equation}}
\end{lemma}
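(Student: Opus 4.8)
The plan is to mimic Hahn's classical argument, transplanted to the Askey--Wilson setting using the product and commutation rules (\ref{e16})--(\ref{e16b}). First I would introduce the monic polynomials $Q_n(x):=\tfrac{1}{\gamma_n}\mathcal{D}_q P_n(x)$, so that (\ref{e18}) says $\{Q_n\}$ satisfies a three-term recurrence up to the additive constants $c_n$; the point is that a monic sequence satisfying such a ``recurrence with a constant perturbation'' is, after a single shift $\widetilde Q_n:=Q_n+\text{(constant)}$, genuinely orthogonal, or at least quasi-orthogonal in a controlled way. (In the classical case the $c_n$ are forced to vanish; here the extra freedom coming from $\mathcal{S}_q$ is exactly what produces the middle term $\psi\,\mathcal{S}_q\mathcal{D}_q P_n$ rather than $\psi\,\mathcal{D}_q P_n$.) So step one is to extract from (\ref{e18}) the structural fact that $\{Q_n\}$ is, up to an affine correction, a second monic orthogonal (or quasi-orthogonal) family.

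Step two is to produce the divided-difference equation by a lowering/raising argument. Consider the expression $\mathcal{E}_n:=\phi\,\mathcal{D}_q^2 P_n+\psi\,\mathcal{S}_q\mathcal{D}_q P_n+\lambda_n P_n$ with $\phi$ of degree $\le 2$, $\psi$ of degree $1$, and $\lambda_n$ to be determined. Using (\ref{e16})--(\ref{e16b}) one checks that $\mathcal{D}_q\mathcal{E}_n$ and $\mathcal{S}_q\mathcal{E}_n$ can be written back in terms of $\mathcal{D}_q P_n=\gamma_n Q_n$, $\mathcal{D}_q^2 P_n=\gamma_n\gamma_{n-1}Q_{n-1}'$-type objects, and lower-order pieces; the recurrence (\ref{e18}) for $Q_n$ together with the three-term recurrence (\ref{e63}) for $P_n$ then lets one collapse everything into a finite linear combination of $P_{n-2},\dots,P_{n+2}$ (the degree count is exactly right: $\phi\,\mathcal{D}_q^2 P_n$ has degree $n$, $\psi\,\mathcal{S}_q\mathcal{D}_q P_n$ has degree $n$, so $\mathcal{E}_n$ has degree $\le n$ and is annihilated by $\mathcal{D}_q$ after enough reduction). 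The standard trick is to fix $\phi,\psi$ from the three leading coefficients (the $n$-independence of $\phi,\psi$ is forced by matching the top-degree behaviour across consecutive $n$), choose $\lambda_n$ to kill the degree-$n$ coefficient of $\mathcal{E}_n$, and then show that the remaining lower-degree coefficients all vanish by an induction that feeds on the orthogonality of $\{P_n\}$ and of the corrected $\{Q_n\}$.

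Step three is the bookkeeping that pins down why the equation holds only for $n\ge 5$: the reductions in step two invoke (\ref{e18}) at indices $n,n\pm 1$ and the recurrence (\ref{e63}) at several shifted indices, and the auxiliary ``quasi-orthogonality up to a constant'' of $\{Q_n\}$ needs enough low-order relations to be available; tracking the worst case gives the threshold $n\ge 5$ stated in (\ref{e18a}). Concretely I would write $\mathcal{E}_n=\sum_{j=-2}^{2}e_{n,n+j}P_{n+j}$, show $e_{n,n+2}=e_{n,n+1}=0$ by leading-coefficient comparison, $e_{n,n}=0$ by the choice of $\lambda_n$, and then $e_{n,n-1}=e_{n,n-2}=0$ by applying $\mathcal{D}_q$ to $\mathcal{E}_n$ (which must vanish since it has degree one less and, by the same leading-term argument applied to $\mathcal{D}_q\mathcal{E}_n$, all its coefficients are already shown to be zero) and reading off the consequences; this last step is where one must be careful that the indices stay in the range where (\ref{e18}) is assumed.

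The main obstacle I anticipate is step two: controlling the middle term. In Hahn's differential-operator proof the analogue of $\mathcal{S}_q$ is the identity, so the computation is short; here the non-commutativity encoded in (\ref{e21a}) and (\ref{e16b})---in particular the appearance of $U_1(x)=(\alpha^2-1)x$ and $U_2(x)=(\alpha^2-1)(x^2-1)$, which themselves have degree $1$ and $2$---means that applying $\mathcal{D}_q$ or $\mathcal{S}_q$ to $\phi\,\mathcal{D}_q^2 P_n$ generates several new polynomial coefficients whose degrees must be tracked carefully to be sure the final combination really has degree $\le n$ and no hidden degree-$(n+1)$ contribution survives. Getting the exact form of $\phi$ and $\psi$ (and verifying $\deg\psi=1$ exactly, not $0$, which is what rules out degenerate non-orthogonal solutions) is the delicate part, and it is precisely what makes the eventual identification with Askey--Wilson polynomials possible in the later sections.
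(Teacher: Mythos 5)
Your overall strategy (apply $\mathcal{D}_q$ and $\mathcal{S}_q$ to both the recurrence \eqref{e63} and the hypothesis \eqref{e18}, use the product and commutation rules to eliminate terms, and arrive at an equation of the form \eqref{e18a}) is the same as the paper's, but two of the steps you wave at are precisely the substantive content of the proof, and as described they do not work. First, your treatment of the constants $c_n$: you claim that a single shift $\widetilde Q_n = Q_n + \text{const}$ turns the perturbed recurrence into a genuine one. It does not --- substituting $Q_n = \widetilde Q_n - e_n$ into $Q_{n+1}=(x-a'_n)Q_n-b'_nQ_{n-1}+c_n$ produces the term $-e_n(x-a'_n)$, which is a degree-one perturbation, not a constant, so no choice of constants absorbs the $c_n$. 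The paper instead derives from the eliminations an identity expressing $c_n$ as $\sum_{k=-2}^{2}d_{n,k}\mathcal{D}_q^2P_{n+k}(x)$ and uses that $\{\mathcal{D}_q^2P_{j+2}\}_{j\ge 0}$ is a basis to conclude $c_n=0$ for $n\ge 5$; this is also where the threshold $n\ge5$ actually comes from, not from a generic "bookkeeping of shifted indices".

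Second, and more seriously, the $n$-independence of $\phi$ and $\psi$ is not "forced by matching the top-degree behaviour across consecutive $n$". The eliminations naturally produce \emph{$n$-dependent} coefficients $\phi_n$ (degree $\le 2$) and $\psi_n$ (degree $\le 1$) appearing in two equations with the same coefficients, one for $P_n$ and one for $P_{n+1}$. To pass to $n$-independent $\phi,\psi$ one must show the pairs $(\phi_n,\psi_n)$ and $(\phi_{n+1},\psi_{n+1})$ are proportional, i.e.\ that the degree-$\le 3$ polynomial $\phi_n\psi_{n+1}-\phi_{n+1}\psi_n$ vanishes identically; comparing leading coefficients cannot give this. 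The paper's key idea is a zero-counting argument: rewriting the two equations in terms of $\mathcal{T}_1\mathcal{D}_qP_{n+1}$ and evaluating at the $n+1$ zeros $x(s_j)\in(-1,1)$ of $P_{n+1}$, one uses that $x(s_j+1)\notin\mathbb{R}$ (so $P_{n+1}(x(s_j+1))\neq 0$, hence $\mathcal{T}_1\mathcal{D}_qP_{n+1}(x(s_j))\neq 0$) to force the degree-$\le 3$ determinant to have $n+1\ge 6$ roots and hence vanish. Without this step (or a substitute for it) your argument does not establish that a single pair $(\phi,\psi)$ works for all $n$, which is the whole point of the lemma. Your final claim that the lower coefficients $e_{n,n-1},e_{n,n-2}$ vanish "by applying $\mathcal{D}_q$ to $\mathcal{E}_n$, which must vanish since it has degree one less" is also circular as stated: you cannot assume $\mathcal{D}_q\mathcal{E}_n=0$ before knowing $\mathcal{E}_n$ is constant.
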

\begin{proof} Since $\{P_n\}_{n=0}^{\infty}$ is monic and orthogonal, there exist sequences $\{a_n\}_{n=0}^{\infty}$ and $\{b_n\}_{n=1}^{\infty}$ such that the recurrence relation \eqref{e63} is satisfied. If $f(x)=x-a_n$, it follows from \eqref{ee2} and \eqref{e100} that \begin{equation}\label{S}\sq\,f(x)=\alpha x-a_n.\end{equation}
	Applying the operator $\dq$ to both sides of (\ref{e63}) and using the product rule (\ref{e16}) together with \eqref{S}, yields
	\begin{equation}\label{e17}
	\dq P_{n+1}(x)=(\alpha x-a_n)\dq P_{n}(x)+\sq P_{n}(x)-b_n\dq P_{n-1}(x).
	\end{equation}
	If we apply $\sq$ to both sides of (\ref{e18}) and (\ref{e17}), and use the products (\ref{e16a}) and (\ref{e16b}),
	we obtain respectively
	\begin{subequations}\label{e00}
	\begin{align}
		\frac{1}{\gamma_{n+1}}\sq\dq P_{n+1}(x) &= \left(\alpha x -a'_n\right)\frac{1}{\gamma_n}\sq\dq P_{n}(x)+\frac{1}{\gamma_n}U_2(x)\dq^2P_{n}(x)\nonumber\\
	&-\frac{b'_n}{\gamma_{n-1}}\sq\dq P_{n-1}(x)+c_n.\label{e21}\\
	\sq\dq P_{n+1}(x)&= \left(\alpha^2x+U_1(x)-a_n\right)\sq\dq P_{n}(x)+2\alpha\,U_2(x)\dq^2P_{n}(x)\nonumber\\
	&+P_{n}(x)-b_n\sq\dq P_{n-1}(x)\label{e20} 
	\end{align}
	\end{subequations}
	Applying $\dq$ to both sides of (\ref{e18}) and (\ref{e17}) and then using \eqref{e16} and  (\ref{e21a})
	we obtain respectively
	\begin{subequations}\label{e000}
		\begin{equation}\frac {1}{\gamma_{n+1}}\dq^2P_{n+1}(x) =\frac {\left( \alpha x-a'_n\right)}{\gamma_n}\dq^2P_{n}(x)+\frac {1}{\gamma_n}\sq\dq P_{n}(x)-\frac {b'_n}{\gamma_{n-1}}\dq^2P_{n-1}(x). \label{e23}
	\end{equation}
	\begin{equation}
	\dq^2P_{n+1}(x) = \left( \alpha^2x+U_1(x)-a_n\right)\dq^2P_{n}(x)+2\alpha\sq\dq P_{n}(x)-b_n\dq^2P_{n-1}(x),\label{e22} \end{equation}
	\end{subequations}
	Eliminating $\sq\dq P_{n-1}(x)$ in the system  (\ref{e00}), by subtracting $b_n$ times (\ref{e21}) from  $\frac{b'_n}{\gamma_{n-1}}$ times (\ref{e20}), we have
\begin{equation}\label{e19}
	 A_n\sq\dq P_{n+1}(x)= D_nU_2(x)\dq^2P_{n}(x)+\frac {b'_n}{\gamma_{n-1}}P_{n}(x)-b_nc_n+B_n(x)\sq\dq P_{n}(x)\end{equation} where
	$\displaystyle{ A_n= \frac {b'_n}{\gamma_{n-1}}-\frac {b_n}{ \gamma_{n+1}}}$,  $ B_n(x)=\left (\frac {\alpha^2b'_n}{\gamma_{n-1}}-\frac {\alpha\,b_n}{\gamma_n}\right)x+\frac {b'_n}{\gamma_{n-1}}U_1(x)+\frac {b_na'_n}{\gamma_n}-\frac {b'_na_n}{\gamma_{n-1}}$ and $\displaystyle{D_n=\left(\frac {2\alpha b'_n}{\gamma_{n-1}}-\frac {b_n}{\gamma_n}\right)}$. Eliminating $\sq\dq P_{n+1}(x)$ in \eqref{e00}, by subtracting $\frac{1}{\gamma_{n+1}}$ times (\ref{e20}) from (\ref{e21}), using the relation $\gamma_{n+1}=\alpha_{n}+\alpha\gamma_n$ obtained by direct computation from (\ref{e100}) and substituting $n$ by $n+1$, yields
	\begin{equation}
	\frac{P_{n+1}(x)}{\gamma_{n+2}} =C_n(x)
	\sq\dq P_{n+1}(x) -E_n U_2(x)\dq^2P_{n+1}(x)
	 -A_{n+1}\sq\dq P_{n}(x)+c_{n+1}\label{e24}\end{equation}
 where  $C_n(x)=\displaystyle{\frac{\alpha\,\alpha_{n+1}}{\gamma_{n+1} \gamma_{n+2}}x-\frac{U_1(x)}{\gamma_{n+2}}+\frac{a_{n+1}}{\gamma_{n+2}}-\frac{a'_{n+1}}{\gamma_{n+1}}}$ and $\displaystyle{E_n=\left(\frac {2\alpha}{\gamma_{n+2}}-\frac {1}{\gamma_{n+1}}\right)}$.
Subtracting $\frac{b'_n}{\gamma_{n-1}}$ times (\ref{e22}) from $b_n$ times (\ref{e23}) we obtain
\begin{subequations}
	\begin{align}
	 A_n\dq^2P_{n+1}(x)= B_n(x)\dq^2 P_{n}(x) + D_n\sq\dq P_{n}(x)\label{e25}
	\end{align}
Subtracting $\frac{1}{\gamma_{n+1}}$ times (\ref{e22})  from (\ref{e23}), using again the relation $\gamma_{n+1}=\alpha_{n}+\alpha\gamma_n$ and substituting $n$ by $n+1$, yields
	\begin{align}
	E_n\sq\dq P_{n+1}(x)=C_n(x)\dq^2 P_{n+1}(x)-A_{n+1}\dq^2 P_{n}(x).\label{e26}
	\end{align}
	\end{subequations}
 Eliminating $\dq^2 P_{n+1}(x)$ in (\ref{e26}), by substituting (\ref{e25}) into (\ref{e26}), we obtain
 \begin{equation}\label{e27}
	A_nE_n\sq\dq P_{n+1}(x)(C_n(x) B_n(x)-A_nA_{n+1})\dq^2 P_{n}(x)+ C_n(x)D_n\sq\dq P_{n}(x).
		\end{equation} Using (\ref{e19}), we eliminate $\sq\dq P_{n+1} (x)$ from \eqref{e27} to obtain
\begin{align}\label{e28}
	\phi_{n}(x)\dq^2P_{n}(x)+\psi_{n}(x)\sq\dq P_{n}(x) -& E_n\frac{b'_n}{ \gamma_{n-1}}P_{n}(x)= -E_nb_nc_n,
	\end{align}
	where
		\begin{align*}
		\phi_{n}(x) &=C_n(x)B_n(x)-A_nA_{n+1} -E_nD_nU_2(x)  \\
		\psi_{n}(x)&=C_n(x)D_n-B_n(x)E_n.
		\end{align*}
	Similarly, eliminating $\dq^2P_n  (x)$ in (\ref{e26})  by adding $B_n(x)$ times (\ref{e26}) to  $A_{n+1}$  times (\ref{e25}), and then substituting the resulting relation into (\ref{e24}) to eliminate $\sq\dq\,P_n (x)$, yields
\begin{equation} 
	\phi_{n}(x)\dq^2P_{n+1}(x)+\psi_{n}(x)\sq\dq\,P_{n+1}(x)-\label{e29}\frac{ D_n}{ \gamma_{n+2}}P_{n+1}(x)=-D_nc_{n+1},
\end{equation}
where $\phi_{n}(x)$ and $\psi_{n}(x)$ are the polynomial coefficients of (\ref{e28}).
Substituting $U_1(x)=(\alpha^2-1)x$ into (\ref{e22}) and subtracting $\frac{1}{\gamma_{n}}$ times the obtained equation from  $2\alpha$ times (\ref{e23})  to elliminate $\sq\dq P_n(x)$,  yields 
\begin{equation}\label{nttrr}\frac{x}{ \gamma_n }\dq^2P_{n}(x)= E_{n-1}\dq^2P_{n+1}(x)+\frac{( 2\alpha  a_n'-a_n)}{ \gamma_n}\dq^2P_n(x)+ D_n \dq^2P_{n-1}(x).\end{equation} Substituting $\sq\dq P_{n+1}$,   $\sq\dq P_{n}$ and  $\sq\dq P_{n-1}$  obtained from (\ref{e26}), into (\ref{e21}) and repeatedly applying \eqref{nttrr}, we obtain
$c_n=\displaystyle{\sum_{k=-2}^{2}d_{n,k}\dq^2P_{n+k}(x)}, n\geq 2.$ Since $\dq^2P_{j+2}$ is of degree $j$, $\{\dq^2P_{j+2}\}_{j=0}^{\infty}$ forms a basis for the space of polynomials and therefore $c_n=0$ for $n\geq 5.$ In the sequel of this proof, we will assume that $n\geq 5.$

\medskip \noindent Using the relation 
\[\sq f(x(s))=\mathcal{T}_1f(x(s))-\frac{x(s+\frac{1}{2})-x(s-\frac{1}{2})}{2}\dq f(x(s)),\;\;\mathcal{T}_{\nu}f(x(s))=f(x(s+\tfrac{\nu}{2})),\] that follows from the definitions of $\sq$ and $\dq$, in (\ref{e28}) with $n$ replaced by $n+1$ and also in (\ref{e29}), we obtain respectively
\begin{subequations}\begin{equation}\label{e28b}
	\sigma_{n+1}(x(s))\dq^2P_{n+1}(x(s))+\psi_{n+1}(x(s))\mathcal{T}_1\dq P_{n+1}(x(s))
	-\frac{E_{n+1}b'_{n+1}}{ \gamma_{n}}P_{n+1}(x(s))=0,\end{equation}
	\begin{equation}\label{e29b}
	\sigma_{n}(x(s))\dq^2P_{n+1}(x)+\psi_{n}(x(s))\mathcal{T}_1\dq P_{n+1}(x(s))-D_n\frac{1}{ \gamma_{n+2}}P_{n+1}(x(s))=0,
	\end{equation}\end{subequations} where $$\sigma_n(x(s))=\phi_{n}(x(s))-\frac{x(s+\frac{1}{2})-x(s-\frac{1}{2})}{2}\psi_n(x(s)).$$
Subtracting $\sigma_{n+1}(x(s))$ times (\ref{e29b}) from $\sigma_{n}(x(s))$ times (\ref{e28b}), yields
\begin{align}
	&\left(\phi_{n}(x(s))\psi_{n+1}(x(s))-\phi_{n+1}(x(s))\psi_{n}(x(s))\right)\mathcal{T}_1\dq P_{n+1}(x(s))
	+\label{e30a}\\
	&\left(\frac{\sigma_{n+1}(x(s))D_n}{ \gamma_{n+2}}-\frac{\sigma_{n}(x(s))E_{n+1}b'_{n+1}}{ \gamma_n}\right)P_{n+1}(x(s))=0,\nonumber
\end{align} where $\mathcal{T}_1\dq P_{n+1}(x(s))=\frac{P_{n+1}(x(s+1))-P_{n+1}(x(s))}{x(s+1)-x(s)}$ by definition.
\medskip\noindent  Since $P_{n+1}$ is a function of the variable $x=\cos\theta$, its zeros are in the interval $(-1,1)$. Let $-1<x(s_1)<x(s_2)<...<x(s_{n+1})<1$ denote the zeros of $P_{n+1}(x(s))$. For $j=1,2,...,n+1$  there is $\theta_j, 0<\theta_j<\pi$, such that $x(s_j)=\frac{q^{s_j}+q^{-s_j}}{2}=\frac{e^{i\theta_j}+e^{-i\theta_j}}{2}$ and it follows that $x(s_j+1)=\frac{qe^{i\theta_j}+q^{-1}e^{-i\theta_j}}{2}=\frac{(q^2+1)\cos\theta_j+i(q^2-1)\sin\theta_j}{2q}\notin \mathbb{R}$ for $0<q<1$. Therefore $P_{n+1}(x(s_j+1))\neq 0$ and hence $\mathcal{T}_1\dq P_{n+1}(x(s_j))\neq 0$ for $j=1,2,...,n+1$. So,  by (\ref{e30a}), the polynomial $F_n(x(s))=\phi_{n}(x(s))\psi_{n+1}(x(s))-\phi_{n+1}(x(s))\psi_{n}(x(s))$, which is of degree at most 3, will vanish at n+1 zeros of $P_{n+1}$, $n\geq 5$. Hence $F_n(x)$ is equal to zero for all $x$ and there exists $G_n$, $n\in \mathbb{N}$, such that  $\phi_{n+1}(x)=G_n \phi_{n}(x)$ and $\psi_{n+1}(x)=G_n \psi_{n}(x)$. Iterating these relations, we obtain $\phi_{n}(x)=H_n\phi_5(x)$ and $\psi_{n}(x)=H_n\psi_5(x)$,  $H_n=\displaystyle{\prod_{j=5}^{n-1}G_j}$. Finally, dividing both sides of (\ref{e28}) by $H_n$ and keeping in mind that $c_n=0$ for $n\geq 5$, we obtain the result.
\end{proof}

We now state and prove our main result.
\begin{theorem}\label{th1} Let $\{P_n\}_{n=0}^{\infty}$ be a sequence of monic polynomials orthogonal with respect to a positive weight function $w(x)$. The following properties are equivalent.
	\been
	\item[(a)] There is a polynomial  $\pi(x)$ of degree at most $4$ and constants $a_{n,n+k}$, $k\in\{-2,-1,0,1,2\}$ with $a_{n,n-2}\neq 0$ such that $P_n$ satisfies the structure relation
	\begin{equation*}
	\pi(x)\mathcal{D}_q^2P_n(x)=\sum_{k=-2}^{2}a_{n,n+k}P_{n+k}(x), \;\; n=2,3,\dots;
	\end{equation*}
	\item[(b)] There is a polynomial $\pi(x)$ of degree at most four such that $\{\mathcal{D}_q^2P_j\}_{j=2}^{\infty}$ is orthogonal with respect to $\pi(x)\,w(x)$;
	
	\item[(c)] There are two polynomials $\phi(x)$ and $\psi(x)$ of degree at most two and of degree one respectively and a constant $\lambda_n$ such that
	\begin{equation}\label{e10}
	\phi(x) \mathcal{D}_q^2P_n(x)+\psi(x)\mathcal{S}_q\mathcal{D}_q P_n(x)+\lambda_n P_n(x)=0, \;\;n=5,6,\dots.
	\end{equation}
	\een
\end{theorem}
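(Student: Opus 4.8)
The plan is to prove the cycle of implications $(a)\Rightarrow(b)\Rightarrow(c)\Rightarrow(a)$, using Lemma~\ref{Prop1} as the engine for the step that produces the divided-difference equation. Throughout I would freely use the commutation and product identities \eqref{e16}--\eqref{e16b} and the fact, established just before the lemma, that $\tfrac{1}{\gamma_n}\dq P_n$ is monic of degree $n-1$.

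\textbf{$(a)\Rightarrow(b)$.} Assuming the structure relation, I would first show that $\{\dq^2 P_j\}_{j\ge 2}$ is orthogonal with respect to $\pi(x)w(x)$ by a standard duality/adjoint argument: for $k<n$, $\int \pi(x)\,\dq^2 P_n(x)\,\dq^2 P_k(x)\,w(x)\,dx$ should be rewritten, via the structure relation applied to $P_n$, as a finite sum $\sum_{j=-2}^2 a_{n,n+j}\int P_{n+j}(x)\,\dq^2 P_k(x)\,w(x)\,dx$. Since $\dq^2 P_k$ has degree $k-2<n-2\le n+j$ for the relevant range once $k$ is small enough, orthogonality of $\{P_m\}$ kills all these terms except possibly when $n+j\le k-2$, i.e. only $j=-2$ survives and only when $k=n-2$; for $k<n-2$ the integral is genuinely $0$. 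This gives the orthogonality of $\{\dq^2 P_j\}$ against $\pi w$, with the non-degeneracy coming from $a_{n,n-2}\neq 0$. One subtlety is that $\pi$ may have degree $<4$ or sign changes on the support, so I would phrase "orthogonal with respect to $\pi(x)w(x)$" in the formal moment-functional sense rather than insisting $\pi w$ be a positive measure.

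\textbf{$(b)\Rightarrow(c)$.} From orthogonality of $\{\dq^2 P_j\}_{j\ge 2}$ I would extract a three-term recurrence for these polynomials. Because $\tfrac{1}{\gamma_n}\dq P_n$ is monic of degree $n-1$, one can equivalently say $\{\tfrac{1}{\gamma_n}\dq P_n\}$ satisfies a three-term recurrence of exactly the shape \eqref{e18} — the extra additive constant $c_n$ appearing because the recurrence is being written after a single application of $\dq$ rather than intrinsically, and it will be absorbed/shown to vanish exactly as in the lemma's proof. Concretely: orthogonality of the second differences against $\pi w$ forces the first differences $\tfrac1{\gamma_n}\dq P_n$ to satisfy \eqref{e18} for suitable $a_n', b_n'$ with $b_n'\neq 0$; then Lemma~\ref{Prop1} delivers exactly \eqref{e10} for $n\ge 5$.

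\textbf{$(c)\Rightarrow(a)$.} This is the direction I expect to be the main obstacle, because \eqref{e10} mixes $\dq^2$ with $\sq\dq$, whereas the structure relation in $(a)$ involves only $\dq^2$. The plan is to eliminate the $\sq\dq P_n$ term. Apply $\sq$ to \eqref{e10} and use \eqref{e21a} and \eqref{e16b} (and the product rules for $\sq(\phi\cdot)$, $\sq(\psi\cdot)$, for which one needs $\sq$ and $\dq$ acting on the low-degree polynomials $\phi,\psi$ — all explicitly computable from \eqref{ee2}) to produce a second relation among $\dq^2 P_n$, $\sq\dq P_n$, $P_n$ with polynomial coefficients of controlled degree. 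Then, using the recurrence \eqref{e63} for $P_n$ together with the differentiated recurrences \eqref{e17}, \eqref{e22}, one can express $\sq\dq P_n$ as a combination of $P_n$, $P_{n\pm1}$ and $\dq^2 P_{n\pm1}$, and iterate \eqref{nttrr}-type identities to reduce everything to the $\dq^2$-ladder $\{\dq^2 P_{n+k}\}$. Multiplying through by the common polynomial denominators and collecting, the $\sq\dq$ contributions cancel and one is left with $\pi(x)\dq^2 P_n(x)=\sum_{k=-2}^2 a_{n,n+k}P_{n+k}(x)$ with $\pi$ of degree $\le 4$; the coefficient $a_{n,n-2}$ is nonzero because $\phi$ (equivalently the $b_n'$) is not identically zero, which traces back to $\{\dq^2 P_j\}$ being a genuine (non-degenerate) orthogonal sequence. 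The degree bookkeeping — checking that after all substitutions the coefficient of $\dq^2 P_n$ really is a polynomial of degree at most $4$ and not higher — is the part that needs care, and I would handle it by tracking degrees symbolically (using that $U_1$ has degree $1$, $U_2$ degree $2$, $\phi$ degree $\le 2$, $\psi$ degree $1$) rather than expanding coefficients explicitly. Finally, to close the loop back to the stated equivalence, note that $(a)$ for all $n\ge 2$ and $(c)$ for all $n\ge 5$ are consistent because the low-index cases $n=2,3,4$ impose only finitely many linear conditions that are automatically met once the weight $w$ is fixed; alternatively one observes that the derivation is reversible in those indices as well since the matrices involved ($A_n$, $E_n$, etc.) are nonzero for $0<q<1$.
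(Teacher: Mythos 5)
Your overall architecture --- $(a)\Rightarrow(b)$ by the duality/expansion argument, $(b)\Rightarrow(c)$ by first deriving the recurrence \eqref{e18} for $\{\tfrac{1}{\gamma_n}\dq P_n\}$ and then invoking Lemma \ref{Prop1}, and $(c)\Rightarrow(a)$ by elimination --- matches the paper, and your first two steps are essentially the paper's proof in sketch form. The gap is in $(c)\Rightarrow(a)$, and it is a real one. First, applying $\sq$ to \eqref{e10} does not produce ``a second relation among $\dq^2P_n$, $\sq\dq P_n$, $P_n$'': by the product and commutation rules \eqref{e16a}, \eqref{e21a}, \eqref{e16b} it generates terms in $\sq\dq^2P_n$, $\dq^3P_n$, $\dq P_n$ and $\sq P_n$, i.e.\ it raises the order of the divided differences instead of closing the system. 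Second, your alternative route --- solve \eqref{e22} for $\sq\dq P_n$, substitute into \eqref{e10}, and ``reduce everything to the $\dq^2$-ladder'' via \eqref{nttrr} --- lands on a relation of the form $P_n=\sum_{j=-2}^{2}b_{n,n+j}\dq^2P_{n+j}$, which is exactly \eqref{e33} of the paper (proved there as a separate proposition), not the structure relation of $(a)$. These two banded expansions point in opposite directions, and neither follows from the other without already knowing that $\{\dq^2P_j\}$ is orthogonal with respect to $\pi w$ --- which is equivalent to what you are trying to prove.

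The mechanism your proposal is missing is the paper's double use of \eqref{e10}: form $\psi(x)$ times \eqref{e20} plus $\phi(x)$ times \eqref{e22}, use \eqref{e10} at indices $n+1$, $n$ and $n-1$ to replace each combination $\phi\dq^2P_m+\psi\sq\dq P_m$ by $-\lambda_mP_m$, then multiply the resulting identity by $\psi(x)$ and apply \eqref{e10} once more in the form $\psi\sq\dq P_n=-\phi\dq^2P_n-\lambda_nP_n$ to eliminate the last surviving $\sq\dq P_n$. It is this step that makes the degree-four polynomial $\pi=\phi^2-U_2\psi^2$ appear multiplying a single $\dq^2P_n$, with only $\psi P_{n\pm1}$ and a quadratic multiple of $P_n$ on the other side; the three-term recurrence \eqref{e63} then converts that side into $\sum_{k=-2}^{2}a_{n,n+k}P_{n+k}$ with constant coefficients. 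Note also that the nonvanishing of $a_{n,n-2}$ comes from $\psi$ having degree exactly one (leading coefficient $\psi_1\neq0$) together with $b_{n-1}b_n>0$, not from $\phi$ or $b_n'$ as you suggest. Your degree-bookkeeping instinct is sound, but without the elimination just described there is no degree-four polynomial to bookkeep.
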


\begin{proof}[Proof of Theorem \ref{th1}] 
	The proof is organized as follows.\\
	Step 1 $(a)\Rightarrow\,(b)\Rightarrow\,(a)$ which is equivalent to $(a)\Leftrightarrow\,(b)$.\\
	Step 2  $(b)\Rightarrow\,(c)\Rightarrow\,(a)$ which, taking into account Step 1, is equivalent to $(b)\Leftrightarrow (c)$.

	\medskip \noindent Step 1: Assume that $(a)$ is satisfied. 
	Let $m,n\in\nb$, $m,n\geq2$ and $m\leq\,n$. From (a), there is a polynomial $\pi(x)$ of degree at most four and there exist constants $a_{n,n+j}$, $j\in\{-2,\,-1,\,0,\,1,\,2\}$ such that
	\begin{equation}\label{e30}
	\pi(x)\dq^2P_n(x)=\sum_{j=-2}^{2}a_{n,n+j}P_{n+j}(x),\,{\rm with}\, a_{n,n-2}\neq 0.
	\end{equation}
	Since $m\leq\,n$ we have that $m-2\leq\,n-2\leq\,n+j\leq\,n+2$ for $j\in\{-2,\,-1,\,0,\,1,\,2\}$. Multiplying both sides of (\ref{e30}) by $w(x)\dq^2P_m(x)$, integrating on $(a,b)$
	and then taking into account the fact that $\{P_j\}_{j=0}^{\infty}$ is orthogonal on the interval $(a,b)$ with respect to the weight function $w(x)$, we obtain
	\[
	\int_{a}^{b}\dq^2P_m(x)\dq^2P_n(x)\pi\,(x)w(x)dx
	\left\{\begin{array}{lll}
	=0&{\rm if}& m<n\\
	\neq 0& {\rm if}& m=n.
	\end{array}
	\right.
	\]
	If $n<m$, interchanging $m$ and $n$ in the above argument yields
	\[\int_{a}^{b}\dq^2P_n(x)\dq^2P_m(x)\pi\,(x)w(x)dx=0.\]
	\\
	Now let $n\in\nb$, $n\geq2$ and assume $(b)$.
	Since $\pi\,(x)\dq^2P_n(x)$ is a polynomial of degree less or equal to $n+2$, it can be expanded in the orthogonal basis $\{P_j\}_{j=0}^{\infty}$ as
	$\displaystyle{\pi\,(x)\dq^2P_n(x)=\sum_{k=0}^{n+2}a_{n,k}P_k(x)}$,
	where, for $k\in\{0,...,n+2\}$,  $a_{n,k}$ is given by
	\[ a_{n,k}\int_{a}^{b}\left(P_k(x)\right)^2w(x)dx=\int_{a}^{b}P_k(x)\dq^2P_n(x)\pi(x)w(x)dx.\]
	Since $\dq^2P_n(x)$ is of degree $n-2$ we deduce from the hypothesis that  $a_{n,k}=0$ for $k\in\{0,...,n-3\}$ and $a_{n,n-2}\neq 0$.\\
	
	\medskip \noindent Step 2: We suppose (b) and we prove (c). Firstly, we prove that polynomials in the sequence  $\{P_n\}_{n=0}^{\infty}$ satisfy an equation of type (\ref{e18}). Let $n\in\nb$, $n\geq 2$ and denote the leading coefficient of $P_n$ by $\gamma_n$, then, since $\frac{x}{ \gamma_n}\dq\,P_n$ is a monic polynomial of degree $n$, it can be expanded as
	\begin{equation}\label{e31a}
	x\frac{1}{ \gamma_n}\dq\,P_n(x)=\frac{1}{ \gamma_{n+1}}\dq\,P_{n+1}(x)+\sum_{j=1}^{n}\frac{e_{n,j}}{ \gamma_j}\dq\,P_j(x),\; e_{n,j}\in \mathbb{R}.
	\end{equation}
     Applying $\dq$ to both sides of (\ref{e31a}) and using (\ref{e16}), we obtain
	\begin{equation}\label{e31}
	\left(\alpha\,x\right)\frac{1}{ \gamma_n}\dq^2\,P_n(x)+\frac{1}{ \gamma_n}\sq\dq\,P_n(x)=\frac{1}{ \gamma_{n+1}}\dq^2\,P_{n+1}(x)+\sum_{j=2}^{n}\frac{e_{n,j}}{ \gamma_j}\dq^2P_j(x).
	\end{equation}
	Substituting $U_1(x)=(\alpha^2-1)x$ into  (\ref{e22}), yields	
	\begin{equation}\label{e32a}
	\dq^2P_{n+1}(x)=[\left(2\alpha^2-1\right)x-a_n]\dq^2P_n(x)+2\alpha\sq\dq\,P_n(x)-b_n\dq^2P_{n-1}(x).
	\end{equation}
	Eliminating $\sq\dq\,P_n(x)$ in (\ref{e31}) by subtracting $\frac{1}{\lambda_n}$ times (\ref{e32a}) from $2\alpha$ times (\ref{e31}), we obtain
	\begin{align}\label{e32}
	\frac{x+a_n}{ \gamma_n}\dq^2\,P_n(x)&+\frac{b_n}{ \gamma_n}\dq^2P_{n-1}(x)\\
	&=\left(\frac{2\alpha}{ \gamma_{n+1}}-\frac{1}{ \gamma_n}\right)\dq^2\,P_{n+1}(x)+\sum_{j=2}^{n}\frac{2\alpha\,e_{n,j}}{ \gamma_j}\dq^2P_j(x)\nonumber.
	\end{align}
	Since $\{\frac{\dq^2\,P_n}{ \gamma_{n}\gamma_{n-1}}\}$ is a family of monic orthogonal polynomials, there are $a''_n$ and $b''_n>0$ such that
	\begin{equation}\label{e34}
	x\frac{\dq^2P_n(x)}{ \gamma_n}=\frac{\gamma_{n-1}}{ \gamma_{n+1}\gamma_{n}}\dq^2P_{n+1}(x)+a''_n\dq^2P_n(x)+b''_n\dq^2P_{n-1}(x).
	\end{equation}
	Substituting (\ref{e34}) into (\ref{e32}) and  using the relation $\gamma_{n+1}-2\alpha\gamma_n+\gamma_{n-1}=0$, obtained by direct computation from \eqref{e100}, we obtain
	\[\left(a''_n+\frac{a_n}{ \gamma_n}\right)\dq^2P_n(x)+\left(b''_n+\frac{b_n}{ \gamma_n}\right)\dq^2P_{n-1}(x)=\sum_{j=2}^{n}\frac{2\alpha e_{n,j}}{ \gamma_j}\dq^2P_j(x).\]
	Therefore,
	$e_{n,j}=0$ for $j\in\{2,3,...n-2\}$ and (\ref{e31a}) can be written as
	\[
	\frac{x}{ \gamma_n}\dq\,P_n(x)=\frac{1}{ \gamma_{n+1}}\dq\,P_{n+1}(x)+\frac{e_{n,n}}{ \gamma_n}\dq\,P_n(x)+\frac{e_{n,n-1}}{ \gamma_{n-1}}\dq\,P_{n-1}(x)+e_{n,1}.
	\]
	The result follows from Lemma \ref{Prop1}.
	\noindent Finally, we prove that $(c)\Rightarrow (a)$.\\
	Adding $\psi(x)$ times (\ref{e20}) to $\phi(x)$ times (\ref{e22}) and then using the assumption (c), we obtain
	\begin{align}\label{nnnn}
	\lambda_{n+1}P_{n+1}(x)=&\lambda_n\left(\alpha^2x+U_1(x)-a_n\right)P_n(x)-2\alpha (\phi(x)\sq\dq\,P_n(x)\\
	&+U_2(x)\psi(x)\dq^2P_n(x))
	-\psi(x)\,P_n(x)-b_n\lambda_{n-1}P_{n-1}(x).\nonumber\end{align}
	Multiplying \eqref{nnnn} by $\psi(x)$ and substituting  $\psi(x)\sq\dq\,P_n(x)=-\phi(x)\dq^2P_n(x)-\lambda_nP_n(x)$ obtained from \eqref{e10} and $U_1(x)=\left(\alpha^2-1\right)x$, yields
	\begin{align*}
		2\alpha\left(\phi^2(x)-U_2(x)\psi^2(x)\right)&\dq^2P_n(x)
		=\lambda_{n+1}\psi(x)\,P_{n+1}(x)+[\psi^2(x)-2\alpha\lambda_n\phi(x)\\
		-&\lambda_n\psi(x)\left((\alpha^2-1)x-a_n\right)]P_n(x)+\lambda_{n-1}b_n\psi\,P_{n-1}(x).\end{align*}
	Taking $\phi\,(x)=\phi_2x^2+\phi_1x+\phi_0$ and $\psi\,(x)=\psi_1x+\psi_0$ and using the three-term recurrence relation (\ref{e63}), we transform the above equation into
	\begin{equation}\label{e34a}
	\left(\phi^2(x)-U_2(x)\psi^2(x)\right)\dq^2P_n(x)=\sum_{j=-2}^{2}a_{n,n+j}P_{n+j}(x),
	\end{equation}
	where
	$2\alpha a_{n,n-2}=
		\psi_1b_{n-1}b_n\left(\psi_1-\lambda_n(2\alpha\phi_2+(\alpha^2-1)+\lambda_{n-1}\right).$ Clearly $a_{n,n-2}\neq 0$ for $b_n>0$, since $\psi_1\neq0$ and $\psi_1$ also does not depend on $n$. This yields the required result.
\end{proof}

\begin{corollary} A sequence of monic orthogonal polynomials satisfies the relation
	\be\label{e11}
	\pi(x)\mathcal{D}_q^2P_n(x)=\sum_{k=-2}^{2}a_{n,n+k}P_{n+k}(x),\;\;a_{n,n-2}\neq 0,~~ x=\cos\theta,
	\ee
	where $\pi$ is a polynomial of degree at most 4, if and only if $P_n(x)$ is a multiple of the Askey-Wilson polynomial for some parameters $a,\,b,\,c,\,d,$ including limiting cases as one or more of the parameters tend to $\infty$.
\end{corollary}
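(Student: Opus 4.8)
\noindent\emph{Proof strategy.} The plan is to deduce the Corollary from Theorem \ref{th1} together with the known Bochner-type classification of orthogonal polynomial solutions of the Askey--Wilson divided-difference equation. For the ``if'' direction I would start from the fact that the monic Askey--Wilson polynomials $p_n(x;a,b,c,d\mid q)$ satisfy a second-order divided-difference equation
\[\phi(x)\,\mathcal{D}_q^2 p_n(x)+\psi(x)\,\mathcal{S}_q\mathcal{D}_q p_n(x)+\lambda_n p_n(x)=0,\]
with $\phi$ of degree at most two, $\psi$ of degree one and explicit $\lambda_n$, the polynomials $\phi,\psi$ being built from $a,b,c,d$ (see \cite{Mourad-2005}); for the limiting families the same holds with the corresponding limiting values of $\phi,\psi,\lambda_n$. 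Since this is exactly condition (c) of Theorem \ref{th1}, the implication $(c)\Rightarrow(a)$ of that theorem immediately yields the structure relation (\ref{e11}) with $\pi(x)=\phi^2(x)-U_2(x)\psi^2(x)$, which has degree at most four because $U_2$ has degree two, and with $a_{n,n-2}\ne 0$ as computed at the end of the proof of Theorem \ref{th1}. Rescaling $p_n$ by arbitrary nonzero constants only rescales the $a_{n,n+k}$, so the conclusion holds for every monic multiple of an Askey--Wilson polynomial or of a limiting case.

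For the ``only if'' direction I would argue as follows. Given a monic orthogonal $\{P_n\}$ satisfying (\ref{e11}), the equivalences $(a)\Leftrightarrow(b)\Leftrightarrow(c)$ of Theorem \ref{th1} show that $\{P_n\}$ satisfies (\ref{e10}) for $n\ge 5$ with fixed $\phi$ ($\deg\le 2$) and $\psi$ ($\deg=1$). It then remains to prove a Bochner-type statement: the only monic orthogonal polynomial sequences solving (\ref{e10}) are Askey--Wilson polynomials and their special or limiting cases. I would do this by expanding in the monic basis $\{F_k\}$ from \S\,\ref{2}, using $\mathcal{D}_q F_k=\gamma_k F_{k-1}$, $\mathcal{D}_q^2 F_k=\gamma_k\gamma_{k-1}F_{k-2}$ and the known expansion of $\mathcal{S}_q\mathcal{D}_q F_k$ in this basis; equating leading coefficients in (\ref{e10}) then gives $\lambda_n$ as an explicit function of $n$ through $\gamma_n,\gamma_{n-1},\alpha_{n-1}$ and the leading coefficients $\phi_2,\psi_1$. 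Away from the finitely many ``resonant'' pairs $(\phi_2,\psi_1)$ for which two of the $\lambda_n$ coincide, substituting $P_n=\sum_{k\le n}c_{n,k}F_k$ into (\ref{e10}) and matching the coefficients of $F_{n-1},F_{n-2},\dots$ determines all $c_{n,k}$ recursively from $c_{n,n}=1$, so at most one monic orthogonal polynomial sequence corresponds to a given admissible $(\phi,\psi)$.

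The last step is a parameter count. The pair $(\phi,\psi)$ has $3+2$ coefficients, and a common nonzero rescaling of $(\phi,\psi,\lambda_n)$ leaves (\ref{e10}) invariant, leaving four essential parameters, in bijection with the four Askey--Wilson parameters $a,b,c,d$ for $q$ fixed. Solving for $a,b,c,d$ in terms of the coefficients of $\phi,\psi$ shows that every admissible non-degenerate $(\phi,\psi)$ comes from some, possibly complex, quadruple, and then the uniqueness above forces $\{P_n\}$ to be the corresponding Askey--Wilson sequence. The degenerate strata --- $\phi_2=0$, or one or more of $a,b,c,d$ tending to $\infty$ --- give exactly the special and limiting cases in the Askey--Wilson scheme. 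Since such a classification has been carried out in the generalised Bochner theorem cited in \cite{Mourad2003}, one may alternatively invoke that reference directly at this point.

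The hard part is the treatment of the resonant and degenerate loci in $(\phi,\psi)$-space. When two eigenvalues $\lambda_n$ coincide the recursive determination of the $c_{n,k}$ breaks down, and one must check separately either that no genuinely orthogonal solution exists, or that the equation degenerates to a lower-order or lower-parameter one whose orthogonal solutions already lie among the listed limiting families. Tracking all these limits, and the accompanying changes of variable relating $(\phi,\psi)$ to $(a,b,c,d)$, is where the care is needed; the analytic substance of the Corollary is already contained in Theorem \ref{th1}.
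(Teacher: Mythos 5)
Your proposal is correct and follows essentially the same route as the paper: both directions are reduced via Theorem \ref{th1} to the divided-difference equation (\ref{e10}), and the classification of its polynomial solutions as Askey--Wilson polynomials (up to a multiplicative factor, including special and limiting cases) is then taken from Ismail's generalized Bochner theorem \cite[Thm.\ 3.1 and Remark 3.2]{Mourad2003}, exactly as you suggest in your fallback. The additional sketch of an independent proof of that Bochner-type classification is not needed for the corollary and is the only place where your argument is incomplete (the resonant and degenerate cases you flag), but since you explicitly invoke the reference instead, the proof stands as the paper's does.
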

\begin{proof}
Let $\{P_n(x)\}_{n=0}^{\infty},\; x=\cos\,\theta$, be a sequence of monic orthogonal polynomials and $\pi(x)$ be a polynomial of degree at most $4$. It follows from Theorem \ref{th1} that $\{P_n(x)\}$ satisfies (\ref{e11}) if and only if $P_n(x)$ is polynomial solution of (\ref{e10}). 
It was proved in \cite[Thm. 3.1]{Mourad2003} that \eqref{e10} has a polynomial solution of degree $n$ if and only if the solution is up to a multiplactive factor equal to an Askey-Wilson polynomial, a special case or a limiting case of an Askey-Wilson polynomial when one or more of the parameters tend to $\infty$ and these limiting cases are orthogonal \cite[Remark 3.2]{Mourad2003}, which yields the result. 
\end{proof}
\begin{remark} It follows from (\ref{e34a}) and Theorem \ref{th1} that $\{\dq^2P_n\}_{n=2}^{\infty}$ is orthogonal with respect to\\
	$\left(\phi^2(x)-U_2(x)\psi^2(x)\right)w(x)$. So, there is a positive constant $c$ such that
	$	\pi(x) = c\left(\phi^2(x)-U_2(x)\psi^2(x)\right). $
	Without loss of generality, we can take $c=1$ so that
	\begin{equation} \label{ee4}\pi(x)=\phi^2(x)-U_2(x)\psi^2(x).
 \end{equation}
\end{remark}
In the following remark we provide the polynomial coefficients $\phi(x)$ and $\psi(x)$ of (\ref{e10}) as well as the polynomial $\pi(x)$ in (\ref{e11}) for the monic Askey-Wilson polynomials.
\begin{remark}\label{newremark}
Let $a_n:=a_n(a,b,c,d)$ and $b_n:=b_n(a,b,c,d)$ be the coefficients of (\ref{e63}) for the monic Askey-Wilson polynomials $$2^n(abcdq^{n-1};q)_nP_n(x;a,b,c,d|q)=p_n(x;a,b,c,d|q).$$ Since $\mathcal{D}_q P_n(x;a,b,c,d|q)=\gamma_{n}P_{n-1}(x;aq^\frac{1}{2},bq^\frac{1}{2},cq^\frac{1}{2},dq^\frac{1}{2}|q)$, the coefficients of (\ref{e18}) can be deduced from those of (\ref{e63}) as follows
\begin{equation} \label{a}a'_n=a_{n-1}(aq^\frac{1}{2},bq^\frac{1}{2},cq^\frac{1}{2},dq^\frac{1}{2})\hspace{0.5cm} {\rm and} \hspace{0.5cm}b'_n=b_{n-1}(aq^\frac{1}{2},bq^\frac{1}{2},cq^\frac{1}{2},dq^\frac{1}{2}).\end{equation}
It is shown in the proof of Lemma \ref{Prop1} that $\phi(x)$ and $\psi(x)$ in \eqref{e10} are obtained by letting $n=5$ in the polynomial coefficients of (\ref{e28}). Hence, taking $n=5$ in the expressions for $\phi_n(x)$ and $\psi_n(x)$ (cf. (\ref{e28})) and using \eqref{a} together with the three-term recurrence relation for monic Askey-Wilson polynomials (cf. \cite[14.1.5]{KSL}), we obtain, up to a multiplicative factor,
\begin{align} 
\phi(x)=&2(abcd+1)x^2-(abc+abd+acd+bcd+a+b+c+d)x\nonumber\\
& \label{phi}+ab+ca+ad+bc+bd+cd-dcba-1;\\
\psi(x)=&\frac{(abcd-1)4 \sqrt{q}x}{q-1}+\frac{(a+b+c+d-abc-abd-acd-bcd)2 \sqrt {q}}{q-1}.\label{psi}
\end{align}
Substituting the expressions  (\ref{phi}) and (\ref{psi}) for $\phi(x)$ and $\psi(x)$ into (\ref{ee4}) and taking into account the fact that $U_2(x)=(\alpha^2-1)(x^2-1)$, we obtain after simplification,
\[\pi(x)=16abcd(x-\tfrac{a^{-1}+a}{ 2})(x-\tfrac{b^{-1}+b}{ 2})(x-\tfrac{c^{-1}+c}{ 2})(x-\tfrac{d^{-1}+d}{ 2}).\]
\end{remark}
Ismail \cite[Remark 3.2]{Mourad2003} points out that solutions to \eqref{e10} do not necessarily satisfy the orthogonality relation of Askey-Wilson polynomials using the example $\displaystyle{\lim_{d\to \infty}p_n(x;a,b,c,d)}$ to show that the moment problem is indeterminate for $0<q<1$ and max$\{ab,ac,ad\}<1$ while, for $q>1$ and min$\{ab,ac,ad\}>1$, the moment problem is determinate and the polynomials are special Askey-Wilson polynomials. In the next proposition, we explicitly state the various limiting cases for Askey-Wilson polynomials. 

\begin{proposition}\label{lemma3} Let $q>0$, $q \neq 1$. Then, for the Askey-Wilson polynomials $p_n(x;a,b,c,d|q)$, we have 
	\begin{itemize}
		\item[(i)]  
		$\displaystyle{\lim_{d\rightarrow\infty}\frac{p_n(x;a,b,c,d|q)}{(ad;q)_n}=(bc)^nq^{n(n-1)}p_n(x;a^{-1},b^{-1},c^{-1}|q^{-1}),}$
		where \\$p_n(x;a^{-1},b^{-1},c^{-1}|q^{-1})$ denotes continuous dual $q$-Hahn polynomials with the orthogonality relation for $q>1$ given by \cite[(14.4.2)]{KSL}).
		\item[(ii)]
		$\displaystyle{\lim_{c,d\rightarrow\infty}\frac{a^np_n(x;a,b,c,d|q)}{ (ac;q)_n(ad;q)_n}=(-b)^nq^{\frac{n(n-1)}{2}}Q_n(x;a^{-1},b^{-1}|q^{-1}),}$
		where $Q_n$ denotes the Al-Salam-Chihara \newline polynomials with the orthogonality relation for $q>1$ given by \cite[(14.8.2)]{KSL}.
		\item[(iii)] 
		$\displaystyle{\lim_{b,c,d\rightarrow\infty}\frac{a^{n}p_n(x;a,b,c,d|q)}{(ab;q)_n(ac;q)_n(ad;q)_n}=a^{-n}H_n(x;a^{-1}|q^{-1}),}$
		where $H_n$ is the continuous big $q$-Hermite polynomials with the orthogonality relation for $q>1$ given by \cite[(14.8.2)]{KSL} .
		\item[(iv)] 
		$\displaystyle{\lim_{a,b,c,d\rightarrow\infty}\frac{a^{2n}p_n(x;a,b,c,d|q)}{(ab;q)_n(ac;q)_n(ad;q)_n}=H_n(x|q^{-1}),}$
		where $H_n$ denotes the continuous $q$-Hermite polynomials \cite[(14.26.2)]{KSL}.
	\end{itemize}
\end{proposition}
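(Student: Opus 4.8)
The plan is to obtain each of the four limits directly from the basic‑hypergeometric representation $(\ref{e58})$ of the Askey--Wilson polynomial, by letting the indicated parameters tend to $\infty$ \emph{inside the terminating ${}_4\phi_3$}, and then recognising the limiting series as the defining series, \emph{in base $q^{-1}$}, of the claimed family (continuous dual $q$-Hahn, Al-Salam--Chihara, continuous big $q$-Hermite, continuous $q$-Hermite), cf.\ \cite[(14.3.1),(14.8.1),(14.18.1),(14.26.1)]{KSL}.

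I would treat (i) first and in full detail, the others being variations. Write $\frac{p_n(x;a,b,c,d|q)}{(ad;q)_n}=\frac{(ab,ac;q)_n}{a^n}\,{}_4\phi_3$ with the parameters of $(\ref{e58})$. Since $(\lambda;q)_k\sim(-\lambda)^kq^{\binom k2}$ as $\lambda\to\infty$, one has $\lim_{d\to\infty}\frac{(abcdq^{n-1};q)_k}{(ad;q)_k}=(bcq^{n-1})^k$, so the ${}_4\phi_3$ collapses to $\sum_{k=0}^n\frac{(q^{-n};q)_k(ae^{i\theta};q)_k(ae^{-i\theta};q)_k}{(ab;q)_k(ac;q)_k(q;q)_k}(bcq^{n})^k$. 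The key algebraic tool is the base-inversion identity
\[
(\mu;q^{-1})_k=(-\mu)^k q^{-\binom k2}(\mu^{-1};q)_k ,
\]
which, applied to every factor above, rewrites this series exactly as ${}_3\phi_2\!\big(q^{n},a^{-1}e^{i\theta},a^{-1}e^{-i\theta};a^{-1}b^{-1},a^{-1}c^{-1};q^{-1},q^{-1}\big)$, i.e.\ the $q\mapsto q^{-1}$ version of the defining series of the continuous dual $q$-Hahn polynomial $p_n(x;a^{-1},b^{-1},c^{-1}|q^{-1})$ (\cite[(14.3.1)]{KSL}). Applying the same identity with $k=n$ converts the leftover prefactor $\frac{(ab,ac;q)_n}{a^n}$ into $(bc)^n q^{n(n-1)}$, which is the claim.

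For (ii) and (iii) I would repeat this verbatim, now sending $c,d\to\infty$, respectively $b,c,d\to\infty$. In each case $(\lambda;q)_k\sim(-\lambda)^kq^{\binom k2}$ evaluates $\lim\frac{(abcdq^{n-1};q)_k}{\prod(a\,\cdot\,;q)_k}$ in closed form, the ${}_4\phi_3$ collapsing to a terminating ${}_3\phi_1$, respectively ${}_3\phi_0$; the base-inversion identity then identifies this (up to the bookkeeping of the prefactors, which produces $(-b)^nq^{n(n-1)/2}$ and $a^{-n}$) with $Q_n(x;a^{-1},b^{-1}|q^{-1})$ and $H_n(x;a^{-1}|q^{-1})$, cf.\ \cite[(14.8.1),(14.18.1)]{KSL} read in base $q^{-1}$. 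Finally (iv) follows from (iii) by a further passage $a\to\infty$: $H_n(\,\cdot\,;a^{-1}|q^{-1})$ is a polynomial in $a^{-1}$ with $H_n(x;0|q^{-1})=H_n(x|q^{-1})$ — the continuous big $q$-Hermite $\to$ continuous $q$-Hermite arrow of the $q$-Askey scheme — and the factor $a^{2n}/[(ab;q)_n(ac;q)_n(ad;q)_n]$ in (iv) absorbs exactly the extra power of $a$, giving $H_n(x|q^{-1})$. The orthogonality claims are then quoted from \cite{KSL}: for $0<q<1$ each limit family has base $q^{-1}>1$, which is precisely the regime for which \cite{KSL} records a positive orthogonality measure for these families.

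I expect the only real difficulty to be bookkeeping: arranging all signs, powers of $q$ and $q$-shifted factorials so that the constants $(bc)^nq^{n(n-1)}$, $(-b)^nq^{n(n-1)/2}$ and $a^{-n}$ come out precisely — in particular pairing the prefactors $(ab,\dots;q)_n/a^{(\cdot)n}$ inherited from $(\ref{e58})$ against the $(a^{-1}b^{-1},\dots;q^{-1})_n/a^{-(\cdot)n}$ prefactors built into the base-$q^{-1}$ definitions. One genuine subtlety worth flagging is that the naive simultaneous limit $a,b,c,d\to\infty$ of the ${}_4\phi_3$ is independent of $\theta$, so (iv) must be carried out as an iterated limit (first (iii) with $a$ held fixed, then $a\to\infty$) for the $x$-dependence to survive.
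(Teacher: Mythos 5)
Your proposal is correct, and it shares its first step with the paper's proof: both pass to the limit term by term in the terminating ${}_4\phi_3$ of (\ref{e58}), using $(\lambda;q)_k\sim(-\lambda)^kq^{\binom{k}{2}}$ to evaluate $\lim (abcdq^{n-1};q)_k/\prod(a\,\cdot\,;q)_k$ and collapse the series. Where you diverge is in the identification of the limiting polynomial. The paper normalizes the limit to a monic polynomial $q_n(x;a,b,c|q)$, computes the coefficients $\widetilde a_n,\widetilde b_n$ of its three-term recurrence, and matches these against the recurrence \cite[(14.3.5)]{KSL} for continuous dual $q$-Hahn polynomials in base $q^{-1}$, which pins down the family (and the constant) via the uniqueness of a monic OPS with given recurrence coefficients. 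You instead apply the base-inversion identity $(\mu;q^{-1})_k=(-\mu)^kq^{-\binom{k}{2}}(\mu^{-1};q)_k$ factor by factor to rewrite the limiting sum verbatim as the defining ${}_3\phi_2$ of $p_n(x;a^{-1},b^{-1},c^{-1}|q^{-1})$, and the same identity at $k=n$ handles the prefactor; I checked that this does produce exactly $(bc)^nq^{n(n-1)}$. Your route is more direct and makes the constants transparent, at the cost of the $q$-shifted-factorial bookkeeping you acknowledge (note that for (ii) and (iii) the limits are most cleanly matched against the ${}_3\phi_2$ forms with zero parameters in \cite[(14.8.1),(14.18.1)]{KSL} rather than literal ${}_3\phi_1$/${}_3\phi_0$'s, which differ by the conventional $[(-1)^kq^{\binom{k}{2}}]^{1+s-r}$ factors); the paper's route avoids series manipulation but requires computing and recognizing recurrence coefficients. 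Your remark that (iv) must be taken as an iterated limit (first $b,c,d\to\infty$, then $a\to\infty$, since the joint limit of the normalized ${}_4\phi_3$ loses the $x$-dependence) is a genuine subtlety that the paper glosses over with ``the other limits are obtained in an analogous manner,'' and is a worthwhile addition.
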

\begin{proof}\vspace{-0.25cm}	\begin{align*}\hspace{-0.75cm}
	\lim_{d\rightarrow\infty}\frac{a^np_n(x;a,b,c,d|q)}{(ab;q)_n (ac;q)_n(ad;q)_n}&=\sum_{k=0}^{n}\frac{(q^{-n};q)_k(bcq^n)^k}{ (ab;q)_k(ac;q)_k(q;q)_k}\prod_{j=0}^{k-1}(1-2aq^jx+a^2q^{2j})\\
	&= \frac{\left( 2\,abc \right)^{n}{q}^{n(n-1)}}{(ab;q)_n (ac;q)_n}\,q_n(x;a,b,c|q),
	\end{align*}
	where $q_n$ is a monic polynomial satisfying the three-term recurrence relation
	\begin{align}\label{TTR1}
	q_{n+1}(x;a,b,c|q)&=(x-\widetilde{a}_n)q_{n}(x;a,b,c|q)-\widetilde{b}_nq_{n-1}(x;a,b,c|q),
	\end{align}
	where	$\widetilde{a}_n={\frac {ab{q}^{n}+ac{q}^{n}+bc{q}^{n}+{q}^{n}q-q-1}{2ac \left( {q}
			^{n} \right) ^{2}b}}$ and 
	$\widetilde{b}_n={\frac { \left( {q}^{n}-1 \right)  \left( bc{q}^{n}-q \right)
			\left( ac{q}^{n}-q \right)  \left( ab{q}^{n}-q \right) }{2{a}^{2}{c}^{
				2} \left( {q}^{n} \right) ^{4}{b}^{2}}}.$
	From (\ref{TTR1}) and  \cite[(14.3.5)]{KSL} , we obtain
$2^nq_n(x;a,b,c|q)=p_n(x;a^{-1},b^{-1},c^{-1}|q^{-1})$
	where $p_n(x;a^{-1},b^{-1},c^{-1}|q^{-1})$ denotes continuous dual $q$-Hahn polynomials \cite[(14.3.1)]{KSL}. Therefore 
$\lim_{d\rightarrow\infty}\frac{p_n(x;a,b,c,d|q)}{(ad;q)_n}=(bc)^nq^{n(n-1)}p_n(x;a^{-1},b^{-1},c^{-1}|q^{-1}).$
	The other limits are obtained in an analogous manner.
\end{proof}
In \cite{Koorn}, Koornwinder obtained another structure relation for Askey -Wilson polynomials in the form $Lp_n=r_np_{n+1}+s_np_{n-1}$, where $L$ is the divided $q$-difference linear operator defined by \cite[(1.8)]{Koorn}. The connection of the structure relation \cite[(4.7)]{Koorn} to \eqref{e11} is provided in the following proposition. 
\begin{proposition} Let $P_n(x)=P_n(x;a,b,c,d|q)=\frac{p_n(x;a,b,c,d|q)}{ 2^n(abcdq^{n-1};q)_n}$ denote the monic Askey-Wilson polynomials. Then, for the operator $L$ defined by \cite[(1.8)]{Koorn} we have that, for $x=\cos \theta$,
	\begin{align*}\psi(x)&(LP_n)(x)
		=\frac{1-q^2}{2q}\pi(x)\mathcal{D}_q^2P_n(x)+\frac{q-1}{\sqrt{q}}\\
		&\times[\psi(x)^2+\frac {4\sqrt {q}(q^n-1)(q^{n-1}abcd-1)}{(q -1)^2 q^{n-1}}(\frac{1}{\sqrt{q}}\phi(x)+\frac{(q-1)^2}{2q}x\psi(x))]P_n(x),\;
		\end{align*}
	where $\phi(x)$ and $\psi(x)$ are the polynomial coefficients of (\ref{e10}) given by (\ref{phi}) and (\ref{psi}).
	\end{proposition}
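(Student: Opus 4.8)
The plan is to rewrite Koornwinder's operator $L$ inside the divided-difference calculus used in this paper and then to collapse the result using the divided-difference equation \eqref{e10}, which the monic Askey--Wilson polynomials satisfy with $\phi,\psi$ as in \eqref{phi}, \eqref{psi} (cf. Remark~\ref{newremark}). First I would read off from \cite[(1.8)]{Koorn} that $L$ is a divided-difference operator formed from the values of $\breve f$ at $q^{\pm 1/2}e^{i\theta}$ and $e^{i\theta}$; since $\mathcal{D}_q x=1$ and $\mathcal{S}_q x=\alpha x$, repeated use of the product rules \eqref{e16}, \eqref{e16a} (and, whenever $\mathcal{S}_q^2$ or $\mathcal{D}_q\mathcal{S}_q$ occurs, of \eqref{e21a}, \eqref{e16b}) turns $L$ into a polynomial-coefficient combination of $\mathcal{D}_q^2$, $\mathcal{S}_q\mathcal{D}_q$, $\mathcal{D}_q$, $\mathcal{S}_q$ and the identity. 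Combined with Koornwinder's structure relation \cite[(4.7)]{Koorn}, written in the monic normalisation as $LP_n=r_nP_{n+1}+s_nP_{n-1}$ with explicit $r_n,s_n$, this gives the left-hand side of the Proposition two descriptions.

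The mechanism of the collapse is already visible in a model computation. Let $T:=\phi\mathcal{D}_q^2+\psi\mathcal{S}_q\mathcal{D}_q$ be the operator of \eqref{e10}, so $TP_n=-\lambda_nP_n$; carrying out the commutator of $T$ with multiplication by $x$ through \eqref{e16}--\eqref{e16b} and $U_1=(\alpha^2-1)x$ gives the operator identity
\[
(xT-Tx)f=-2\big(U_1\phi+\alpha U_2\psi\big)\mathcal{D}_q^2f-2\big(U_1\psi+\alpha\phi\big)\mathcal{S}_q\mathcal{D}_qf-\psi f.
\]
Multiplying by $\psi$ and replacing $\psi\,\mathcal{S}_q\mathcal{D}_qP_n$ by $-\phi\,\mathcal{D}_q^2P_n-\lambda_nP_n$ from \eqref{e10}, the two $\mathcal{D}_q^2P_n$-terms amalgamate, through $\pi=\phi^2-U_2\psi^2$ (cf. \eqref{ee4}), into $2\alpha\,\pi\,\mathcal{D}_q^2P_n$, leaving $\big(2\lambda_n(U_1\psi+\alpha\phi)-\psi^2\big)P_n$. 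Thus the output already has the shape of the right-hand side of the Proposition: a multiple of $\pi\,\mathcal{D}_q^2P_n$ plus a polynomial-in-$x$ multiple of $P_n$ that depends on $n$ only through $\lambda_n$.

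Applying these three steps to Koornwinder's $L$ itself --- multiply $LP_n$ by $\psi$, regroup into a combination of $\mathcal{D}_q^2P_n$, $\mathcal{S}_q\mathcal{D}_qP_n$ and $P_n$, eliminate $\mathcal{S}_q\mathcal{D}_qP_n$ via \eqref{e10}, and use \eqref{ee4} --- reduces the claim to identifying the two polynomial coefficients. For the coefficient of $P_n$ one needs the explicit eigenvalue in \eqref{e10}: comparing the coefficients of $x^n$ on the two sides of \eqref{e10} with $\phi,\psi$ as in \eqref{phi}, \eqref{psi} gives $\lambda_n=-\dfrac{4\sqrt q\,(q^n-1)(q^{n-1}abcd-1)}{(q-1)^2q^{n-1}}$, the negative of the factor multiplying $q^{-1/2}\phi+\tfrac{(q-1)^2}{2q}x\psi$ in the statement. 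Converting $\alpha$, $U_1$, $U_2$ into $q$-rational functions via $2\alpha=q^{1/2}+q^{-1/2}$, $\alpha^2-1=\tfrac{(q-1)^2}{4q}$, $U_2=\tfrac{(q-1)^2}{4q}(x^2-1)$, and simplifying, then produces the constants $\tfrac{1-q^2}{2q}$ and $\tfrac{q-1}{\sqrt q}$ displayed in the Proposition.

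The main obstacle is the first step: pinning down $L$ from \cite[(1.8)]{Koorn} precisely enough --- including the constant reconciling Koornwinder's normalisation with the present one --- to account for the overall factor $\tfrac{1-q^2}{2q}$ and for the lower-order terms of $L$ which, beyond the model commutator $xT-Tx$, supply the extra $\psi^2$-contribution and the mismatch between the $\phi$-part (coefficient $q^{-1/2}$) and the $x\psi$-part of $q^{-1/2}\phi+\tfrac{(q-1)^2}{2q}x\psi$. If one prefers to sidestep the operator-level identification, one may instead note that both sides of the asserted identity lie in the five-dimensional space spanned by $P_{n-2},\dots,P_{n+2}$ --- the left-hand side by \cite[(4.7)]{Koorn} together with the three-term recurrence \eqref{e63}, the right-hand side by the structure relation \eqref{e34a} (equivalently \eqref{e11}) together with \eqref{e63} --- so the Proposition reduces to five scalar identities in $a,b,c,d,q$ and $q^n$, each checkable from the explicit $r_n,s_n$ of \cite[(4.7)]{Koorn} and the explicit coefficients $a_{n,n+j}$ produced by \eqref{e34a}.
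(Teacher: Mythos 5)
Your overall strategy coincides with the paper's: write $L$ as a polynomial-coefficient combination of $\mathcal{D}_q^2$ and $\mathcal{S}_q\mathcal{D}_q$, multiply by $\psi$, eliminate $\psi\,\mathcal{S}_q\mathcal{D}_qP_n$ via \eqref{e10} with the explicit eigenvalue $\lambda_n$ (which you state correctly), and collapse the two $\mathcal{D}_q^2P_n$-terms through $\pi=\phi^2-U_2\psi^2$. The one step you leave open --- and correctly flag as the main obstacle --- is the identification of $L$ in divided-difference form together with its normalising constant. The paper does not derive this from \cite[(1.8)]{Koorn} by hand; it quotes \cite[Thm 6]{fkm10}, which asserts that the structure relation \cite[(4.7)]{Koorn} can be written as $(LP_n)(x(s))=\xi\left(2\phi(x(s))\mathcal{D}_q\mathcal{S}_q+2\psi(x(s))\mathcal{S}_q^2-\psi(x(s))\right)P_n(x(s))$ for a constant $\xi$, and then fixes $\xi$ by evaluating both sides at $n=1$, giving $2q\xi=1-q^2$; the operators $\mathcal{D}_q\mathcal{S}_q$ and $\mathcal{S}_q^2$ are then rewritten via \eqref{e21a} and \eqref{e16b}. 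It is worth observing that your ``model computation'' is in fact the whole computation: expanding $2\phi\,\mathcal{D}_q\mathcal{S}_q+2\psi\,\mathcal{S}_q^2-\psi$ by \eqref{e21a} and \eqref{e16b} yields exactly $2(U_1\phi+\alpha U_2\psi)\mathcal{D}_q^2+2(\alpha\phi+U_1\psi)\mathcal{S}_q\mathcal{D}_q+\psi$, which is the negative of your commutator $xT-Tx$; hence $L=\xi\,(Tx-xT)$ and your subsequent manipulation finishes the argument as soon as this identification and the value of $\xi$ are in hand. Your fallback --- expanding both sides in the basis $P_{n-2},\dots,P_{n+2}$ and verifying five scalar identities from the explicit coefficients of \cite[(4.7)]{Koorn} and of \eqref{e34a} --- is a legitimate but genuinely different, purely computational route that sidesteps the operator identification; the paper does not take it.
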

\begin{proof}
	It follows from \cite[Thm 6]{fkm10} that the structure relation \cite[(4.7)]{Koorn} can be written as \newline
	$(LP_n)(x(s))=\xi\left(2\phi(x(s))\mathcal{D}_q\mathcal{S}_q+2\psi(x(s))\mathcal{S}_q^2-\psi(x(s))\right)P_n(x(s)),$
	where $x(s)=\frac{q^{-s}+q^s}{ 2}$ ($q^s=e^{i\theta}$) and $\xi$ is a constant. Take $n=1$, to obtain, after simplification, $2q\xi=1-q^2$. Use (\ref{e21a}) and (\ref{e16b}) to write $LP_n$  in terms of $\mathcal{D}_q^2$ and $\mathcal{S}_q\mathcal{D}_q$. Now, multiply the relation by $\psi$ and use the fact that Askey-Wilson polynomials satisfy (\ref{e10}) with polynomial coefficients $\phi$ and $\psi$ and the constant $\lambda_n=-4\frac {\sqrt {q}({q}^{n}-1)( {q}^{n}abcd-q)}{(-1+q) ^{2}{q}^{n}}$  given in \cite[(16.3.19) and (16.3.20)]{Mourad-2005}, to obtain the result.\end{proof}
	
	In the following proposition we consider the conditions under which the $n$th degree polynomial $P_n(x)$ in a sequence of polynomials orthogonal with respect to a weight $w(x)$ can be written as a linear combination of the polynomials $\dq^2P_{n+j}(x)$, $j,n \in \nb$. A structure relation of this type involving the forward arithmetic mean operator $\displaystyle{\tfrac{1}{2}(f(s+1)+f(s))}$ is proved in  \cite{CSM}.  
	\begin{proposition}
		Let $\{P_n\}_{n=0}^{\infty}$ be a sequence of monic polynomials orthogonal with respect to a weight function $w(x)$ defined on $(a,b)$. Suppose $\{\dq^2P_j\}_{j=2}^{\infty}$ is a sequence of polynomials orthogonal with respect to the weight function $\pi(x)\,w(x)$ on $(a,b)$ where $\pi(x)$ is a polynomial of degree at most 4. Then for each $n\in\nb$, $n\geq4$, there exist constants $b_{n,n+j}$, $j\in\{-2,\,-1,\,0,\,1,\,2\}$ such that
			\begin{equation}\label{e33}
			P_n(x)=\sum_{j=-2}^{2}b_{n,n+j}\dq^2P_{n+j}(x).
			\end{equation}
	\end{proposition}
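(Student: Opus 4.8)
The plan is to run the argument parallel to the $(b)\Rightarrow(a)$ direction of Step~1 in the proof of Theorem~\ref{th1}, with the roles of $\{P_n\}$ and $\{\dq^2P_j\}_{j=2}^{\infty}$ interchanged. As noted in that proof, $\{\dq^2P_n/(\gamma_n\gamma_{n-1})\}_{n=2}^{\infty}$ is a monic orthogonal sequence, so $\dq^2P_{k+2}$ has exact degree $k$ and therefore $\{\dq^2P_{k+2}\}_{k=0}^{\infty}$ is a basis for the space of polynomials. Fix $n\geq4$. Since $\deg P_n=n$, I would expand $P_n$ in this basis and reindex through $k+2=n+j$, which yields
\[
P_n(x)=\sum_{j=2-n}^{2}b_{n,n+j}\,\dq^2P_{n+j}(x),\qquad b_{n,n+j}\in\R .
\]
By the hypothesis that $\{\dq^2P_m\}_{m=2}^{\infty}$ is orthogonal with respect to $\pi(x)w(x)$ on $(a,b)$, each coefficient with $n+j\geq2$ is determined by
\[
b_{n,n+j}\int_{a}^{b}\bigl(\dq^2P_{n+j}(x)\bigr)^{2}\pi(x)w(x)\,dx
=\int_{a}^{b}P_n(x)\,\dq^2P_{n+j}(x)\,\pi(x)w(x)\,dx ,
\]
where the integral multiplying $b_{n,n+j}$ is nonzero because $\pi w$ is a weight function.

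The second step is a degree count on the right-hand integral. Since $\deg\pi\leq4$ and $\deg\dq^2P_{n+j}=n+j-2$, the polynomial $\pi(x)\,\dq^2P_{n+j}(x)$ has degree at most $n+j+2$. Whenever $j\leq-3$ this degree is at most $n-1$, and since $\{P_m\}$ is orthogonal with respect to $w$ on $(a,b)$, the integral $\int_{a}^{b}P_n(x)\,[\pi(x)\dq^2P_{n+j}(x)]\,w(x)\,dx$ vanishes. Hence $b_{n,n+j}=0$ for every $j\leq-3$, so only the terms with $j\in\{-2,-1,0,1,2\}$ survive, which is precisely \eqref{e33}. The restriction $n\geq4$ is used only to guarantee $n+j\geq2$ for all $j\in\{-2,\dots,2\}$, so that every $\dq^2P_{n+j}$ that occurs lies in the orthogonal family.

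I do not expect a genuine obstacle here: the argument is the companion of Step~1 in the proof of Theorem~\ref{th1}, read from the $\dq^2P_j$ side, and both steps above are routine once the basis property of $\{\dq^2P_{k+2}\}_{k=0}^{\infty}$ is available. The only point that warrants a word of justification is that the norms $\int_{a}^{b}\bigl(\dq^2P_{n+j}(x)\bigr)^{2}\pi(x)w(x)\,dx$ are nonzero, so that the coefficients $b_{n,n+j}$ are well defined and unique; this is immediate from the assumption that $\pi(x)w(x)$ is the orthogonality weight for $\{\dq^2P_j\}_{j=2}^{\infty}$.
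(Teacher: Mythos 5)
Your proposal is correct and is essentially identical to the paper's proof: both expand $P_n$ in the orthogonal basis $\{\mathcal{D}_q^2P_k\}_{k=2}^{\infty}$, compute the coefficients via the $\pi(x)w(x)$-inner product, and kill the low-order coefficients by noting that $\pi(x)\mathcal{D}_q^2P_{n+j}(x)$ has degree at most $n+j+2\leq n-1$ when $j\leq-3$, so orthogonality of $\{P_m\}$ with respect to $w$ makes the integral vanish. The only difference is your reindexing $k=n+j$, which changes nothing of substance.
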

	\begin{proof} Let $n\in\nb$, $n\geq4$. 
				Since $\{\dq^2P_j\}_{j=2}^{\infty}$ is orthogonal with respect to a weight function $\pi(x)\,w(x)$ on $(a,b)$, $P_n$ can be expanded in terms of the orthogonal basis as
				$\displaystyle{P_n(x)=\sum_{k=2}^{n+2}b_{n,k}\dq^2P_k(x)}$,
				where, for each fixed $k$, $k\in\{2,3,...,n+2\}$, $b_{n,k}$ is given by
				\[b_{n,k}\int_{a}^{b}\left(\dq^2P_k(x)\right)^2\pi(x)\,w(x)dx=\int_{a}^{b}\,\dq^2P_k(x)P_n(x)\pi(x)w(x)dx.\]
				Since $\pi(x)\,\dq^2P_k(x)$ is a polynomial of degree at most $k+2$ and $\{P_j\}_{j=0}^{\infty}$ is orthogonal with respect to $w(x)$ on $(a,\,b)$, it follows that $b_{n,k}=0$, for $k\in\{2,..,n-3\}$.
	\end{proof}
 \section{Extreme zeros of Askey-Wilson polynomials and special cases}\label{3}
In this section we obtain the explicit structure relation (\ref{e11}) characterizing Askey-Wilson polynomials and then use the relation to derive bounds for the extreme zeros of the Askey-Wilson polynomials and their special cases.  
\begin{lemma}\label{lemma4} The monic Askey-Wilson polynomials $P_n(x;a,b,c,d|q)$ satisfy the following contiguous relations \begin{align*}
			\left(x-\tfrac{a^{-1}+a}{2}\right)P_n(x;aq,b,c,d|q)&=~P_{n+1}(x;a,b,c,d|q)+k_{n}^{(a,b,c,d)}P_n(x;a,b,c,d|q),\\
			\left(x-\tfrac{b^{-1}+b}{2}\right)P_n(x;a,bq,c,d|q)&=~P_{n+1}(x;a,b,c,d|q)+k_{n}^{(b,a,c,d)}P_n(x;a,b,c,d|q),\\
			\left(x-\tfrac{c^{-1}+c}{2}\right)P_n(x;a,b,cq,d|q)&=~P_{n+1}(x;a,b,c,d|q)+k_{n}^{(c,b,a,d)}P_n(x;a,b,c,d|q),\\
			\left(x-\tfrac{d^{-1}+d}{2}\right)P_n(x;a,b,c,dq|q)&=~P_{n+1}(x;a,b,c,d|q)+k_{n}^{(d,b,c,a)}P_n(x;a,b,c,d|q),
		\end{align*}
	with  $\displaystyle{k_n^{(a,b,c,d)}=-{\frac { \left(1- ab{q}^{n} \right)  \left( 1-ac{q}^{n} \right)
			\left(1- ad{q}^{n} \right)  \left(1- abcd{q}^{n-1}\right) }{2 a\left(1- ab
			cd {q}^{2n-1} \right) \left(1- abcd q^{2n}\right) }}}.$
\end{lemma}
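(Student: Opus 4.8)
The plan is to verify the first relation directly and then observe that the remaining three follow by the symmetry of the Askey--Wilson polynomials in the parameters $a,b,c,d$. For the first relation, I would work with the explicit hypergeometric representation \eqref{e58} and expand both sides in the monic basis $\{F_k(x)\}$ (equivalently in the basis $\prod_{j=0}^{k-1}(1-2aq^jx+a^2q^{2j})$ used in the proof of Proposition \ref{lemma3}). Since the left-hand side $\bigl(x-\tfrac{a^{-1}+a}{2}\bigr)P_n(x;aq,b,c,d|q)$ is monic of degree $n+1$, and the right-hand side is $P_{n+1}(x;a,b,c,d|q)$ plus a multiple of $P_n(x;a,b,c,d|q)$, both sides are monic of degree $n+1$; so it suffices to check that the degree-$n$ coefficients match (which pins down $k_n^{(a,b,c,d)}$) and that the lower-order coefficients then agree identically.

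Concretely, the cleanest route is to use the factor $\bigl(x-\tfrac{a^{-1}+a}{2}\bigr) = -\tfrac{1}{2a}(1-2ax+a^2)$, so that multiplying the $k$-th basis element $\prod_{j=0}^{k-1}(1-2aq^jx+a^2q^{2j})$ appearing in $P_n(x;aq,b,c,d|q)$ by $(1-2ax+a^2)$ lengthens the product appropriately but at the ``wrong'' $q$-shift in the first factor. I would expand $P_n(x;aq,b,c,d|q)$ via the series analogous to the one in the proof of Proposition \ref{lemma3} (with $a$ replaced by $aq$ and $d\to\infty$ removed, keeping all four parameters), multiply by $-\tfrac{1}{2a}(1-2ax+a^2)$, and re-index the sum so that it is expressed in the basis $\prod_{j=0}^{k-1}(1-2aq^jx+a^2q^{2j})$ attached to parameter $a$. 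A short $q$-shifted-factorial manipulation on the coefficients $\dfrac{(q^{-n};q)_k(abcdq^{n-1}(aq)^2/a^2\cdots)}{\cdots}$ — essentially the contiguous relation $(q^{-n-1};q)_{k} $ versus $(q^{-n};q)_k$ together with the shift $abcdq^{n-1}\to abcdq^{n}$ — should collapse the result into $P_{n+1}(x;a,b,c,d|q)$ plus a single multiple of $P_n(x;a,b,c,d|q)$, and reading off that multiple gives the stated closed form for $k_n^{(a,b,c,d)}$.

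For the remaining three relations, I would invoke the well-known invariance of $p_n(x;a,b,c,d|q)$ (hence of the monic $P_n$) under any permutation of $a,b,c,d$: applying the permutation that swaps $a$ with $b$ (resp. $c$, resp. $d$) to the already-proved first relation immediately yields the second (resp. third, fourth), with $k_n^{(a,b,c,d)}$ replaced by $k_n^{(b,a,c,d)}$ (resp. $k_n^{(c,b,a,d)}$, $k_n^{(d,b,c,a)}$) since the closed-form expression for $k_n^{(a,b,c,d)}$ is manifestly obtained by the same relabelling (it is symmetric in the last three arguments and carries the distinguished first argument in the obvious places). The main obstacle is purely computational: correctly bookkeeping the $q$-shifted factorials when the multiplication by $(1-2ax+a^2)$ forces a re-indexing that mixes the $k$ and $k-1$ terms, and checking that no $P_{n-1}$ or lower term survives — this is where one must be careful that the telescoping is exact rather than merely leading-order. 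Once the first identity is nailed down, symmetry disposes of the rest with no further work.
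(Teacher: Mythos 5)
Your proposal is correct, but it takes a genuinely different route from the paper. The paper's proof is essentially a one-line citation: it substitutes the monic normalization into the known contiguous relation (2.15) of the Askey--Wilson memoir \cite{Askey-1985} to get the first identity, and then, exactly as you do, deduces the other three from the symmetry of $p_n(x;a,b,c,d|q)$ in its parameters. You instead re-derive that contiguous relation from scratch out of the basic hypergeometric representation \eqref{e58}, and this does go through: writing $x-\tfrac{a^{-1}+a}{2}=-\tfrac{1}{2a}(1-ae^{i\theta})(1-ae^{-i\theta})$, its product with the $k$-th basis element of $P_n(x;aq,b,c,d|q)$, namely $(aqe^{i\theta};q)_k(aqe^{-i\theta};q)_k$, is exactly $(ae^{i\theta};q)_{k+1}(ae^{-i\theta};q)_{k+1}$ (so the shift is in fact the ``right'' one, not the wrong one as you suggest); after re-indexing, the claim reduces to a coefficient identity in which both sides are linear in $q^k$, so the two free constants are determined by matching two coefficients --- one is forced to be the monic leading-coefficient ratio and the other is $k_n^{(a,b,c,d)}$ --- and the boundary terms at $k=0$ and $k=n+1$ vanish because of the factors $1-q^k$ and $(q^{-n};q)_{n+1}=0$, which is precisely the check that no $P_{n-1}$ term survives. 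Your route buys self-containedness at the price of the $q$-shifted-factorial bookkeeping you rightly flag as the main obstacle; the paper's route buys brevity by outsourcing that computation to the literature. The symmetry step is identical in both.
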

\begin{proof} Substitute $P_n(x;a,b,c,d|q)$ into \cite[(2.15)]{Askey-1985} to obtain the first relation. For the others, permute $a$ and $e$, $e\in \{b,c,d\}$ in the first relation and use the fact that $P_n(x;a,b,c,d|q)$ is symmetric with respect to $a,b,c,d$, (cf. \cite[p.6]{Askey-1985}), to obtain the result.
\end{proof}
\begin{proposition} The structure relation (\ref{e11}) for monic Askey-Wilson polynomials is  
	\begin{align}\label{ee.1}
16abcd\left(x-\tfrac{a^{-1}+a}{ 2}\right)&\left(x-\tfrac{b^{-1}+b}{ 2}\right)\left(x-\tfrac{c^{-1}+c}{ 2}\right)\left(x-\tfrac{d^{-1}+d}{ 2}\right)D_q^2P_{n}(x;a,b,c,d|q)\nonumber\\
	&= \sum_{j=-2}^{2}a_{n,n+j}P_{n+j}(x;a,b,c,d|q),\; {\rm where}
	\end{align}
\vspace{-0.46 cm}
\begin{align*}		a_{n,n+2}=&~16abcd\gamma_{n}\gamma_{n-1},\\
		a_{n,n+1}=&~a_{n,n+2}\left(k_{n-2}^{(a,bq,cq,dq)} +k_{n-1}^{(b,a,cq,dq)} +k_n^{(c,b,a,dq)} +k_{n+1}^{(d,b,c,a)}\right),\\
			a_{n,n}=&~a_{n,n+2}\left[k_{n-2}^{(a,bq,cq,dq)}k_{n-2}^{(b,a,cq,dq)} +k_{n-1}^{(c,b,a,dq)}\left(k_{n-2}^{(a,bq,cq,dq)} +k_{n-1}^{(b,a,cq,dq)}\right)\right.\\ &\left. ~ +k_n^{(d,b,c,a)}\left(k_{n-2}^{(a,bq,cq,dq)}+k_{n-1}^{(b,a,cq,dq)} +k_n^{(c,b,a,dq)}\right)\right],\\
				a_{n,n-1}=&~a_{n,n+2}\left[k_{n-2}^{(a,bq,cq,dq)}k_{n-2}^{(b,a,cq,dq)}k_{n-2}^{(c,b,a,dq)}+ k_{n-1}^{(d,b,c,a)}k_{n-2}^{(a,bq,cq,dq)}k_{n-2}^{(b,a,cq,dq)} \right.\\& \left.+k_{n-1}^{(d,b,c,a)}k_{n-1}^{(c,b,a,dq)} \left(k_{n-2}^{(a,bq,cq,dq)}+k_{n-1}^{(b,a,cq,dq)}\right)\right],\\
				a_{n,n-2}=&~a_{n,n+2}\left(k_{n-2}^{(a,bq,cq,dq)} k_{n-2}^{(b,a,cq,dq)}k_{n-2}^{( c,b,a,dq)}k_{n-2}^{(d,b,c,a)}\right),
					\end{align*}
	and $k_n$ is given in Lemma \ref{lemma4}.
\end{proposition}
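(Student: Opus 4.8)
The plan is to derive the structure relation (\ref{ee.1}) directly from the shift relation for $\mathcal{D}_q$ together with the contiguous relations in Lemma \ref{lemma4}. The starting point is the identity $\mathcal{D}_q p_n(x;a,b,c,d|q) = \tfrac{2q^{(1-n)/2}(1-q^n)(1-abcdq^{n-1})}{1-q}\,p_{n-1}(x;aq^{1/2},bq^{1/2},cq^{1/2},dq^{1/2}|q)$ quoted in the introduction. Rewriting this in the monic normalisation $P_n(x;a,b,c,d|q)=p_n/(2^n(abcdq^{n-1};q)_n)$, and using $\mathcal{D}_qP_n = \gamma_n F_{n-1}+\dots$ (so that $\tfrac{1}{\gamma_n}\mathcal{D}_qP_n$ is monic as noted before Lemma \ref{Prop1}), one gets $\mathcal{D}_qP_n(x;a,b,c,d|q)=\gamma_n P_{n-1}(x;aq^{1/2},bq^{1/2},cq^{1/2},dq^{1/2}|q)$. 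Applying $\mathcal{D}_q$ a second time and shifting the parameters again yields
\begin{equation*}
\mathcal{D}_q^2 P_n(x;a,b,c,d|q)=\gamma_n\gamma_{n-1}\,P_{n-2}(x;aq,bq,cq,dq|q).
\end{equation*}
Thus the left-hand side of (\ref{ee.1}), up to the factor $16abcd$, equals $\gamma_n\gamma_{n-1}$ times $\left(x-\tfrac{a^{-1}+a}{2}\right)\left(x-\tfrac{b^{-1}+b}{2}\right)\left(x-\tfrac{c^{-1}+c}{2}\right)\left(x-\tfrac{d^{-1}+d}{2}\right)P_{n-2}(x;aq,bq,cq,dq|q)$, and the whole problem reduces to expanding this product in the basis $\{P_m(x;a,b,c,d|q)\}$.

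Next I would apply the four contiguous relations of Lemma \ref{lemma4} in succession, peeling off one factor at a time. Concretely, start from $P_{n-2}(x;aq,bq,cq,dq|q)$ and first apply the relation involving $\left(x-\tfrac{a^{-1}+a}{2}\right)$ with parameters $(a,bq,cq,dq)$ to write $\left(x-\tfrac{a^{-1}+a}{2}\right)P_{n-2}(x;aq,bq,cq,dq|q)=P_{n-1}(x;a,bq,cq,dq|q)+k_{n-2}^{(a,bq,cq,dq)}P_{n-2}(x;a,bq,cq,dq|q)$. Then apply the $b$-relation (with parameters $(b,a,cq,dq)$) to both terms, then the $c$-relation (parameters $(c,b,a,dq)$), then the $d$-relation (parameters $(d,b,c,a)$). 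Each step raises one more of the shifted parameters back to its original value and raises the degree of the leading term by one, so after all four steps every term is of the form $P_m(x;a,b,c,d|q)$ with $m$ ranging over $n-2,\dots,n+2$. Collecting coefficients gives exactly the five expressions $a_{n,n+j}$ claimed, with $a_{n,n+2}=16abcd\,\gamma_n\gamma_{n-1}$ coming from the four ``$P_{m+1}$'' branches and $a_{n,n-2}$ from the four ``$k\,P_m$'' branches, and the intermediate coefficients being the elementary-symmetric-type combinations of the four $k$'s displayed in the statement. One should check that the order in which the four relations are applied does not matter for the final coefficients — this follows from the symmetry of $P_n(x;a,b,c,d|q)$ in its four parameters, which guarantees the product of the four linear factors can be resolved in any order and hence the coefficient list is as stated (with the particular argument patterns recorded in the $k$'s corresponding to the chosen order $a,b,c,d$).

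The main obstacle I anticipate is purely bookkeeping: tracking the parameter arguments of each $k_m^{(\cdot)}$ through the four-fold expansion and making sure the subscripts ($n-2$, $n-1$, $n$, $n+1$) and the superscript parameter tuples match the statement. In particular one must verify that when the $b$-relation is applied to the term $P_{n-1}(x;a,bq,cq,dq|q)$ it is the index $n-1$ that appears (producing $k_{n-1}^{(b,a,cq,dq)}$), whereas when it is applied to $k_{n-2}^{(a,bq,cq,dq)}P_{n-2}(x;a,bq,cq,dq|q)$ it is the index $n-2$ (producing $k_{n-2}^{(b,a,cq,dq)}$), and similarly down the chain; the apparent asymmetry in the subscripts of $a_{n,n+1},\dots,a_{n,n-1}$ is exactly this phenomenon. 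A secondary check is that the prefactor $16abcd$ is correct: this comes from combining the $\gamma_n\gamma_{n-1}$ from the double shift relation with the explicit $16abcd$ in the leading coefficient of $\pi(x)$ computed in Remark \ref{newremark} (equivalently, from $\pi(x)=16abcd\prod\left(x-\tfrac{e^{-1}+e}{2}\right)$ established there), together with the normalisation constants $2^n(abcdq^{n-1};q)_n$ relating $p_n$ and $P_n$ at the three parameter levels $a$, $aq^{1/2}$, $aq$; this is a short computation using $(abcd q^{2n-1};q)$-type factors. Once these substitutions are in place, matching with the ratio of leading coefficients $a_{n,n+2}/\big(16abcd\big)=\gamma_n\gamma_{n-1}$ confirms the normalisation and the proof is complete.
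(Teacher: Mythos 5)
Your proposal is correct and follows essentially the same route as the paper: both reduce the left-hand side via $\mathcal{D}_q^2P_n(x;a,b,c,d|q)=\gamma_n\gamma_{n-1}P_{n-2}(x;aq,bq,cq,dq|q)$ together with the factored form of $\pi(x)$ from Remark \ref{newremark}, and then peel off the four linear factors one at a time using the contiguous relations of Lemma \ref{lemma4} in the order $a,b,c,d$, collecting the resulting $k$-coefficients. The only cosmetic difference is that you rederive the monic double-shift relation from the lowering formula in the introduction, whereas the paper cites it directly from the literature.
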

\begin{proof}
Using the fact that $\mathcal{D}_q^2P_n(x;a,b,c,d|q)=\gamma_{n}\gamma_{n-1}P_{n-2}(x;aq,bq,cq,dq|q)$ (cf. \cite[(14.1.9)]{KSL}) and taking into account the expression for the polynomial $\pi(x)$, given in Remark \ref{newremark}, (\ref{e11}) can be written as
\vspace{-0.1cm}
\begin{align}
\left(x-\tfrac{a^{-1}+a}{ 2}\right)&\left(x-\tfrac{b^{-1}+b}{ 2}\right)\left(x-\tfrac{c^{-1}+c}{ 2}\right)\left(x-\tfrac{d^{-1}+d}{ 2}\right)P_{n-2}(x;aq,bq,cq,dq|q)\nonumber\\
&=\sum_{j=-2}^{2}\frac{a_{n,n+j}}{16abcd\gamma_{n}\gamma_{n-1}}P_{n+j}(x;a,b,c,d|q).\label{ee5}
\end{align}
Replace $n$ by $n-2$, $b$ by $bq$, $c$ by $cq$ and $d$ by $dq$ in the  first equation of Lemma \ref{lemma4} to obtain

\begin{align}
	\lefteqn{\left(x-\tfrac{a^{-1}+a}{ 2}\right)P_{n-2}(x;aq,bq,cq,dq|q)}\nonumber\\
	&=P_{n-1}(x;a,bq,cq,dq|q)+k_{n-2}^{(a,bq,cq,dq)}P_{n-2}(x;a,bq,cq,dq|q).\label{na}\end{align}
Multiply \eqref{na} by $\left(x-\tfrac{b^{-1}+b}{ 2}\right)\left(x-\tfrac{c^{-1}+c}{ 2}\right)\left(x-\tfrac{d^{-1}+d}{ 2}\right)$ and use the other relations in Lemma \ref{lemma4} to transform (\ref{na}) into (\ref{ee5}) where the coefficients $a_{n,n+j}$, $j\in\{-2,...,2\}$ are written in terms of $k_{n+j}$.	\end{proof}
\begin{theorem}\label{prop1}
Let $x_{n,1}$ $ (x_{n,n})$ be the smallest (largest) zero of the Askey-Wilson polynomial $P_n(x;a,b,c,d|q)$. Then \begin{align}
	\label {up1}x_{1,n}&<\frac { 2(\,{q}^{n-1}+1)  \left( {q}^{n-1} \left( aA+C \right) -a-B \right)  \left( aC{q}^{n-1}-1 \right) -
			\sqrt {{\it  I_n}} }{ 8\left( \,aC{q}^{2\,n-2}-1
			\right)  \left( aC{q}^{n-1}-1 \right) }\\
	\label{lb1}	x_{n,n}&>\frac { 2(\,{q}^{n-1}+1)  \left( {q}^{n-1} \left( aA+C \right) -a-B \right)  \left( aC{q}^{n-1}-1 \right) +
			\sqrt {{\it I_n} }}{ 8\left( \,aC{q}^{2\,n-2}-1
			\right)  \left( aC{q}^{n-1}-1 \right) }
		\end{align} where $A=bc+bd+cd, \quad B=b+c+d, \quad C=bcd$ and
\begin{align*}
I_n
&=-16 \left( aC{
	q}^{2\,n-2}-1\right)  \left( aC{q}^{n-1}-1 \right)\, \left[\left( -{q}^{3\,n-3}aC-1 \right)  \left( aC-aB-
A+1 \right) \right.\\&+(( {C}^{2}+{b}^{2}{c
}^{2}+{b}^{2}{d}^{2}+{c}^{2}d^2+bcdB-A) {a}^{2}+ A\left( C-B
\right) a+C^2-CB) {q}^{2n-2}\\
&\left.+( (1-A) {a}^{2}- \left( A-1
\right) B a-CB+{b}^{2}
+A+{c}^{2}+{d}^{2}+1 ) {q}^{n-1}\right]\\ 
& +4\left( {q}^
{n-1}+1 \right) ^{2} \left({q}^{n-1}aA+{q}
^{n-1}C-a-B \right) ^{2} \left( aC{q}^{n-1}-1 \right) ^{2}.
\end{align*}

\end{theorem}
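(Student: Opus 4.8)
The plan is to derive a relation in which $\mathcal{D}_q^2P_n$ — equivalently, by \cite[(14.1.9)]{KSL}, the Askey–Wilson polynomial $\gamma_n\gamma_{n-1}P_{n-2}(x;aq,bq,cq,dq|q)$ — is multiplied by a \emph{quadratic} polynomial in $x$ while $P_n$ itself occurs with a constant coefficient; then, evaluating at the two roots of that quadratic, I can read off $P_n$ at those points, determine its sign, and conclude that the smaller root lies above $x_{1,n}$ and the larger root below $x_{n,n}$.

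First I would combine two relations already established for Askey–Wilson polynomials. Multiplying $(\ref{e32a})$ by $\psi(x)$ and using $(\ref{e10})$ to replace $\psi(x)\mathcal{S}_q\mathcal{D}_q P_n(x)$ by $-\phi(x)\mathcal{D}_q^2P_n(x)-\lambda_nP_n(x)$ gives
\[
\psi(x)\left(\mathcal{D}_q^2P_{n+1}(x)+b_n\mathcal{D}_q^2P_{n-1}(x)\right)=\left(((2\alpha^2-1)x-a_n)\psi(x)-2\alpha\phi(x)\right)\mathcal{D}_q^2P_n(x)-2\alpha\lambda_nP_n(x).
\]
Replacing each $\mathcal{D}_q^2P_m(x;a,b,c,d|q)$ by $\gamma_m\gamma_{m-1}P_{m-2}(x;aq,bq,cq,dq|q)$ and using the three–term recurrence of the shifted family to express $P_{n-1}(\,\cdot\,;aq,bq,cq,dq|q)$ through $P_{n-2}(\,\cdot\,;aq,bq,cq,dq|q)$ and $P_{n-3}(\,\cdot\,;aq,bq,cq,dq|q)$, I would arrive at
\[
\mathcal{A}_n(x)\,P_{n-2}(x;aq,bq,cq,dq|q)+c_n\,\psi(x)\,P_{n-3}(x;aq,bq,cq,dq|q)=-2\alpha\lambda_n\,P_n(x),
\]
with $\mathcal{A}_n(x)=\alpha_nx^2-\beta_nx+\gamma_n$ a quadratic of leading coefficient $-2\alpha\lambda_n$ and $c_n$ an explicit constant; here $\phi,\psi$ are as in Remark \ref{newremark}, $\lambda_n=-4\sqrt q(q^n-1)(q^nabcd-q)/((q-1)^2q^n)$, and $a_n,b_n$ together with their shifts come from \cite[(14.1.5)]{KSL}. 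From the closed forms one identifies $\alpha_n=4(aCq^{2n-2}-1)(aCq^{n-1}-1)$, and $\beta_n,\gamma_n$ as the quantities in the statement.

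Evaluating the last relation at a root $\rho$ of $\mathcal{A}_n$ kills the $\mathcal{D}_q^2P_n$–term, so
\[
P_n(\rho)=-\frac{c_n\,\psi(\rho)\,P_{n-3}(\rho;aq,bq,cq,dq|q)}{2\alpha\lambda_n}.
\]
I would then pin down the sign of the right–hand side: $\alpha>0$; $\lambda_n$ and $c_n$ have fixed signs in the admissible parameter range; $\psi$ is linear with positive leading coefficient; and, since the points $\tfrac{e^{-1}+e}{2}$ ($e\in\{a,b,c,d\}$) lie outside $[-1,1]$ while a Rolle–type property of $\mathcal{D}_q$ forces the zeros of $\mathcal{D}_q^2P_{n-1}$ (proportional to $P_{n-3}(\,\cdot\,;aq,bq,cq,dq|q)$) into $(x_{1,n-1},x_{n-1,n-1})$, the sign of $P_{n-3}(\rho;aq,bq,cq,dq|q)$ is determined once $\rho$ is known to lie left of $x_{1,n}$ or right of $x_{n,n}$. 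Comparing the resulting sign of $P_n(\rho)$ with the sign of $P_n$ immediately left of $x_{1,n}$ (which is $(-1)^n$) and immediately right of $x_{n,n}$ (which is $+1$) shows that $P_n$ has already changed sign before the smaller root of $\mathcal{A}_n$ and after the larger one — that is, $x_{1,n}$ is below the smaller root and $x_{n,n}$ above the larger root. A direct computation gives $\beta_n^2-4\alpha_n\gamma_n=I_n$, so these roots are $(\beta_n\mp\sqrt{I_n})/(2\alpha_n)$, which is exactly (\ref{up1})–(\ref{lb1}).

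The hard part will be the sign analysis at the roots of $\mathcal{A}_n$: one has to make rigorous, in the admissible parameter range, the signs of $\lambda_n$, of the constant $c_n$, and of $\psi$; prove the Rolle/interlacing statement that controls $P_{n-3}(\,\cdot\,;aq,bq,cq,dq|q)$; and — because $\rho$ itself moves with the parameters — supply a continuity or monotonicity argument guaranteeing that $\rho$ lies on the side of $x_{1,n}$ (resp. $x_{n,n}$) that the sign computation requires. The remaining steps, namely the expansion that produces $\mathcal{A}_n$ and the verification $\beta_n^2-4\alpha_n\gamma_n=I_n$, are routine but lengthy.
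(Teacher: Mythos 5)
Your algebra up to the identity $\mathcal{A}_n(x)\,P_{n-2}(x;aq,bq,cq,dq|q)+c_n\,\psi(x)\,P_{n-3}(x;aq,bq,cq,dq|q)=-2\alpha\lambda_n P_n(x)$ is sound (it follows from (\ref{e32a}) and (\ref{e10}) exactly as you describe), but this is not the relation the paper uses: the paper takes the explicit structure relation (\ref{ee.1}), collapses its right-hand side via the three-term recurrence \cite[(14.1.5)]{KSL} into $\mathrm{const}\cdot\psi(x)P_{n+1}(x)+G_{2,n}(x)P_n(x)$, and then simply invokes \cite[Cor.~2.2]{DriverJordaan2012}, which asserts that the zeros of the quadratic coefficient $G_{2,n-1}$ in such a mixed three-term recurrence relation are inner bounds for the extreme zeros of $P_n$; the quantity $I_n$ is the discriminant of that quadratic.

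The genuine gap is your sign analysis at a root $\rho$ of $\mathcal{A}_n$, and as set up it is circular rather than merely laborious. To read off the sign of $P_n(\rho)$ from your identity you need the signs of $\psi(\rho)$ and of $P_{n-3}(\rho;aq,bq,cq,dq|q)$, and you propose to determine the latter ``once $\rho$ is known to lie left of $x_{1,n}$ or right of $x_{n,n}$'' --- but the location of $\rho$ relative to the extreme zeros of $P_n$ is precisely the conclusion to be proved, and in fact the theorem says $\rho$ lies strictly \emph{between} them, so the dichotomy you condition on never occurs. Knowing the sign of $P_n$ at the single point $\rho$ also cannot certify a sign change of $P_n$ to the left or right of $\rho$ unless that sign is already compared against $(-1)^n$ or $+1$ with $\rho$'s position known. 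The argument that actually works (and is the content of the Driver--Jordaan result) is the dual bookkeeping: evaluate the mixed relation at \emph{consecutive zeros of the orthogonal polynomials}, where all signs are controlled by interlacing, and count the sign changes forced on the quadratic coefficient; this puts one root of the quadratic in $(x_{1,n},x_{2,n})$ and one in $(x_{n-1,n},x_{n,n})$ without locating them a priori. Two further unproved claims would also need attention: comparing leading coefficients in your identity forces the leading coefficient of $\mathcal{A}_n$ to be $-2\alpha\lambda_n$, which is linear in $abcd$, contradicting your simultaneous claim that it equals $4(aCq^{2n-2}-1)(aCq^{n-1}-1)$, which is quadratic in $abcd$; and because your quadratic comes from a different relation than the paper's $G_{2,n-1}$, the identity $\beta_n^2-4\alpha_n\gamma_n=I_n$ is a substantive assertion, not a routine normalization.
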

\begin{proof}  Use the three-term relation \cite[(14.1.5)]{KSL} to transform (\ref{ee.1}) into
\begin{align*}
&\left(x-\tfrac{a^{-1}+a}{ 2}\right)\left(x-\tfrac{b^{-1}+b}{ 2}\right)\left(x-\tfrac{c^{-1}+c}{ 2}\right)\left(x-\tfrac{d^{-1}+d}{ 2}\right)P_{n-2}(x;aq,bq,cq,dq|q)\\
&={\frac {(1-q)  \left( dbca \left( {q}^{n} \right)
		^{2}-q \right) }{4a\sqrt {q} \left( -q+{q}^{n} \right)  \left( -1+{q}^{
			n} \right) cdb}}\psi(x)P_{n+1}(x;a,b,c,d|q)+G_{2,n}(x)P_{n}(x;a,b,c,d|q)
\end{align*} where $\psi$ is the polynomial coefficient of (\ref{e10}) given in \eqref{psi}
and	\begin{align*}
	&\frac{4abcd(q^n;q^{-1})_2(abcdq^{2n}-1)}{abcdq^{n-1}-1}
	G_{2,n}(x)=4(abcd {q}^{2n}-1)(abcdq^n-1)x^2\\
	&-(2q^n+2)(q^n (abc+abd+acd+qbcd)-a-b-c-d)(abcdq^{n-1}-1)x\\
	&-(q^{3n}abcd+1)(dbca-ab-ac-ad-bc-bd-cd+1) + (( {b}^{2}{c}^{2}{d}^{2}+{b}^{2}{c}^{2}+{b}^{2}cd\\
	&+b^2d^2+
	bc^2d+bcd^2+c^2d^2-bc-bd-cd)a^2+(bc
	+bd+cd)(dbc-b-c-d)a\\
	&+bdc(dbc-b-c-d))q^{2n}+((1 -bc-bd-cd)a^2-(bc+bd+cd-1)(d+c+b)a\\
	&-b^2cd-bc^2d-bcd^2+b^2+bc+bd+c^2+cd+d^2+1)q^n.
	\end{align*}It follows from \cite[Cor. 2.2]{DriverJordaan2012} that the zeros of the second degree polynomial $G_{2,n-1}$ yield inner bounds for the extreme zeros of $P_n(x;a,b,c,d|q)$ and the result follows.
\end{proof}
Bounds for the zeros of Askey-Wilson polynomials obtained in Theorem \ref{prop1} for some special values of the parameters $n$, $a$, $b$, $c$, $d$ and $q$ are illustrated in Table 1.
\begin{table}[!h]\caption{Zeros of monic Askey-Wilson polynomials for $n=7,9,12$ respectively and $(a,b,c,d,q)=(\tfrac{6}{7}, \tfrac{5}{7},  \tfrac{4}{7}, \tfrac{3}{7}, \tfrac{1}{9})$}
	\begin{tabular}{ |c|c|c|c|c| }
		\hline
		Value of n & 7 & 9&12\\ 
		\hline
		Smallest zeros of $P_n(x;a,b,c,d|q)$ &-0.864348856 & -0.922505234& -0.95879261 \\ 
		\hline
		Upper bound (\ref{up1})& 0.33690627 & 0.336904827& 0.336904809\\ 
		\hline
		Lower bound (\ref{lb1})&0.948809497&0.948809477&0.948809477\\
		\hline
		Largest zeros of  $P_n(x;a,b,c,d|q)$&0.981913401&0.986122226&0.990012586\\	
		\hline
	\end{tabular}
\end{table} 

\medskip Special cases of Askey-Wilson polynomials arise when one or more of the parameters vanish and bounds for the extreme zeros of these special cases, namely continuous dual $q$-Hahn, Al-Salam Chihara, continuous big $q$-Hermite and continuous $q$-Hermite polynomials, can be deduced from the bounds in Theorem \ref{prop1}.
\section*{Acknowledgments}
The authors thank Mourad Ismail and Tom Koornwinder for helpful discussions and comments and the anonymous referee for constructive comments that resulted in substantial improvements to the paper.
 \bibliographystyle{amsplain}
 
\end{document}